\documentclass{amsart}

\usepackage{matlab-prettifier}
\usepackage{tikz}
\usepackage{graphicx}
\usepackage{amsthm}
\usepackage{amsmath}
\usepackage{amsfonts}
\usepackage{amssymb}
\usepackage{cite}

\newtheorem{lemma}{Lemma}
\newtheorem{corollary}{Corollary}
\newtheorem{theorem}{Theorem}
\newtheorem{oldthm}{Theorem}

\numberwithin{equation}{section}

\title[Optimizing Resource Distribution]{Optimizing Resource Distribution in a One-Dimensional Logistic Diffusion Model}
\author{Junyoung Heo}
\address{Department of Mathematical Sciences, KAIST}
\email{joeheomail@kaist.ac.kr}

\author{Yubin Lee}
\address{Department of Mathematical Sciences, KAIST}
\email{youbin0606@kaist.ac.kr}
\keywords{Diffusive logistic equation, Optimal control, Phase diagram analysis, Block decomposition, Advantage function}

\begin{document}

\begin{abstract}
In this article, we study the optimization of resource distributions in a one-dimensional logistic diffusive model. The goal is to determine a distribution on a bounded one-dimensional domain that maximizes the total population at equilibrium. Previous works have shown that optimal resources are bang-bang, and in one dimension, a sufficiently large dispersal rate forces the optimal resource to be concentrated. For general dispersal rates, however, the analysis becomes more difficult because the equilibrium population may behave irregularly, and the optimal resource may be fragmented. To address this, we introduce a block decomposition that reduces fragmented resources to a collection of concentrated blocks. We then define an advantage function, which measures the gain in the equilibrium population obtained by allocating resources on a fixed interval and is used to analyze the contribution of each block to the total population. This function also allows us to reformulate the optimization problem as a convexity analysis of the advantage function. We prove the superlinearity of this function when the total resource is small enough, and this property leads to an explicit characterization of the optimal control with sufficiently small total resource.
\end{abstract}
\maketitle

\section{Introduction}

\noindent\textit{1.1.\ Problem Statement}
\smallskip

We consider the logistic diffusion model for a bounded domain \(\Omega \subset \mathbb{R}\).

\begin{equation}\label{main}
\begin{cases}
\theta_t = \mu \Delta \theta + \theta (m - \theta) & \textnormal{in  } \Omega \times (0, \infty)
\\
\dfrac{\partial\theta}{\partial\nu} = 0 & \textnormal{on  } \partial\Omega \times (0, \infty),
\end{cases}
\end{equation}
where \(\theta : \Omega \to \mathbb{R}_+\) is the population density of a species, \(m \in L^{\infty}(\Omega)\) is a resource, \(\mu > 0\) is the dispersal rate, \(\partial\Omega\) is of class piece-wise \(C^3\). The following convergence and regularity result, established by Cantrell and Cosner \cite{CC91}, holds for solutions of (\ref{main}).

\begin{oldthm}
    Assume that \(m \in C^2(\Omega)\), and \(\theta|_{t=0} \not \equiv 0\) is given. Then, there exists \(\theta^* \in C^{2, \alpha}(\bar{\Omega})\), independent of the initial condition \(\theta|_{t = 0}\), such that \(\theta \to \theta^*\) uniformly as \(t \to \infty\). In addition, \(\theta^*\) is a unique solution of the following semilinear elliptic equation.
    \begin{equation}\label{equibra}
    \begin{cases}
    \mu \Delta \theta + \theta (m - \theta) = 0 & \textnormal{in  } \Omega
    \\
    \dfrac{\partial\theta}{\partial\nu} = 0 & \textnormal{on  } \partial\Omega
    \\
    \theta \geq 0, \theta \not \equiv 0
    \end{cases}
    \end{equation}
    \end{oldthm}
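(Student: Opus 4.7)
The plan is to combine parabolic a priori bounds with a sub-/super-solution sandwich argument, establish uniqueness of the positive stationary solution by a scaling (sliding) argument, and obtain the stated regularity by standard elliptic bootstrapping. I would treat the three conclusions---existence of $\theta^*$, uniqueness, and uniform convergence $\theta(\cdot,t) \to \theta^*$---as separate sub-goals.

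First I would establish global well-posedness and boundedness for \eqref{main}. Classical parabolic theory yields a unique classical solution with $\theta(\cdot,t) > 0$ for all $t > 0$ (the strong maximum principle gives strict positivity once $\theta_0 \not\equiv 0$). The constant $M := \max(\|m\|_\infty, \|\theta_0\|_\infty)$ is a super-solution of \eqref{main} because $\theta(m-\theta) \leq 0$ at $\theta = M$; hence $0 < \theta(x,t) \leq M$ uniformly in time. To produce a stationary solution, I work in the biologically meaningful regime in which the principal Neumann eigenvalue $\lambda_1$ of $-\mu\Delta - m$ is negative, with positive eigenfunction $\phi_1$. Then $\varepsilon \phi_1$ is a sub-solution of \eqref{equibra} for sufficiently small $\varepsilon>0$, since $-\mu\Delta(\varepsilon\phi_1) - \varepsilon\phi_1(m - \varepsilon\phi_1) = \varepsilon\phi_1(\lambda_1 + \varepsilon\phi_1) \leq 0$. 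Monotone iteration between $\varepsilon\phi_1$ and $M$ then produces a classical stationary solution, which I will call $\theta^*$.

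For uniqueness, suppose $\theta_1,\theta_2$ are two positive equilibria and set $t^* := \sup\{t > 0 : t\theta_2 \leq \theta_1 \text{ on } \bar\Omega\}$. If $t^* < 1$, then $w := \theta_1 - t^*\theta_2 \geq 0$ touches zero at some $x_0 \in \bar\Omega$ and satisfies a linear elliptic inequality with homogeneous Neumann data; the strong maximum principle (and the Hopf lemma at a boundary touching point) forces $w \equiv 0$, contradicting $t^* < 1$ at interior points where $\theta_2 > 0$. Hence $t^* \geq 1$, and by symmetry $\theta_1 = \theta_2$. Uniform convergence follows by sandwiching: solutions of \eqref{main} starting from $\varepsilon\phi_1$ and from $M$ are monotone in $t$ (non-decreasing and non-increasing, respectively) by the parabolic comparison principle, converge to equilibria of \eqref{equibra}, and by uniqueness both limits equal $\theta^*$. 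For arbitrary $\theta_0 \not\equiv 0$, the solution at $t=1$ is strictly positive on $\bar\Omega$, so $c\phi_1 \leq \theta(\cdot,1) \leq M'$ for some $c,M' > 0$, and the sandwich conclusion applies to arbitrary initial data.

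The hard step is uniqueness: the scaling argument requires a careful application of the Hopf boundary-point lemma when $x_0 \in \partial\Omega$, and here I rely on the Neumann condition $\partial_\nu \theta_i = 0$ for $i=1,2$ to see that $\partial_\nu w(x_0) = 0$, which is precisely what Hopf rules out unless $w \equiv 0$. The regularity statement $\theta^* \in C^{2,\alpha}(\bar\Omega)$ is then a standard bootstrap: since $\theta^* \in L^\infty$ and $m \in C^2$, the right-hand side $\theta^*(m - \theta^*)$ lies in $C^{\alpha}$ by Schauder interior-plus-boundary estimates, which upgrades $\theta^*$ to $C^{2,\alpha}(\bar\Omega)$.
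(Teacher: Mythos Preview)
The paper does not give its own proof of this statement: it is Theorem~A, quoted verbatim from Cantrell--Cosner \cite{CC91} as background, with no argument supplied. So there is nothing in the paper to compare your proposal against.

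That said, your outline is the standard route to this result and is essentially correct. A couple of remarks. First, your existence step tacitly assumes the principal Neumann eigenvalue of $-\mu\Delta - m$ is negative; in the paper's setting this is guaranteed by $\int_\Omega m > 0$ (the constant test function shows the Rayleigh quotient is negative), so you should say so explicitly rather than calling it ``the biologically meaningful regime.'' Second, in the uniqueness step your phrase ``satisfies a linear elliptic inequality'' hides the actual computation: with $w = \theta_1 - t^*\theta_2$ one gets
\[
\mu\Delta w + \bigl(m - \theta_1 - t^*\theta_2\bigr)w = -t^*(1-t^*)\theta_2^2 \le 0,
\]
and the zero-order coefficient $m - \theta_1 - t^*\theta_2$ need not have a sign. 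The clean way to reach a contradiction is pointwise at the touching point $x_0$: if $x_0$ is interior then $w(x_0)=0$, $\Delta w(x_0)\ge 0$, yet the equation forces $\mu\Delta w(x_0) = -t^*(1-t^*)\theta_2^2(x_0) < 0$; if $x_0\in\partial\Omega$ the Hopf lemma together with $\partial_\nu w(x_0)=0$ gives the same contradiction. Once you spell this out, the sliding argument is complete.
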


For general \(m \in L^{\infty}(\Omega)\), if the solution of (\ref{main}) reaches a nonnegative equilibrium state \(\theta_{m, \mu}\), then it solves (\ref{equibra}). Moreover, the existence and uniqueness of \(\theta_{m, \mu}\) are given by the following theorem, which was also established in \cite{CC91}.

\begin{oldthm}\label{uniqueness}
    If \(m \in L^\infty (\Omega)\) satisfies \(\int_\Omega m > 0\), for every \(\mu > 0\), (\ref{equibra}) has a unique weak solution \(\theta_{m, \mu} \in C^{1, \alpha} (\Omega)\).
\end{oldthm}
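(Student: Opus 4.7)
The plan is to obtain existence and $C^{1,\alpha}$-regularity by approximating $m$ with smooth data and invoking old Theorem A, and to prove uniqueness via a Brezis-Oswald type identity that exploits the strict monotonicity of the map $s \mapsto m(x) - s$ appearing in the nonlinearity $\theta(m-\theta)$.

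\textbf{Existence.} Let $m_n \in C^2(\bar\Omega)$ be mollifications of $m$ satisfying $\|m_n\|_{L^\infty} \leq \|m\|_{L^\infty}$, $m_n \to m$ a.e.\ and in every $L^p$, with $\int_\Omega m_n > 0$ for large $n$. Old Theorem A produces positive classical solutions $\theta_n \in C^{2,\alpha}(\bar\Omega)$. Evaluating the equation at an interior maximum gives the uniform bound $\|\theta_n\|_{L^\infty} \leq \|m\|_{L^\infty}$; $L^p$ elliptic regularity with bounded right-hand side then yields uniform $W^{2,p}$ estimates for every $p < \infty$, hence uniform $C^{1,\alpha}$ bounds. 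Extracting a subsequence, $\theta_n \to \theta_{m,\mu}$ in $C^{1,\alpha}(\bar\Omega)$; the nonlinear term passes to the limit by dominated convergence, so $\theta_{m,\mu}$ is a weak solution with the stated regularity. To guarantee $\theta_{m,\mu} \not\equiv 0$, I would use that $\int m > 0$ forces the principal Neumann eigenvalue of $-\mu\Delta - m$ to be strictly negative; the analogous statement holds uniformly for $m_n$, so a sub/supersolution construction with subsolution a suitable multiple of the principal eigenfunction and supersolution the constant $\|m\|_{L^\infty}$ yields a uniform lower bound that survives the limit.

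\textbf{Uniqueness.} Given two weak solutions $\theta_1, \theta_2$, rewrite the PDE as $-\mu \Delta \theta_i + (\theta_i - m)\theta_i = 0$ with $L^\infty$ coefficients. The strong maximum principle together with the Hopf lemma at the Neumann boundary forces $\theta_i \geq c > 0$ on $\bar\Omega$, so the test functions $\psi_1 := (\theta_1^2 - \theta_2^2)/\theta_1$ and $\psi_2 := (\theta_2^2 - \theta_1^2)/\theta_2$ are admissible in $H^1(\Omega)$. Substituting $\psi_i$ into the weak formulation for $\theta_i$, adding, and grouping the gradient terms into perfect squares produces the Brezis-Oswald identity
\begin{equation*}
\mu \int_\Omega \left(\left|\nabla \theta_1 - \tfrac{\theta_1}{\theta_2}\nabla \theta_2\right|^2 + \left|\nabla \theta_2 - \tfrac{\theta_2}{\theta_1}\nabla \theta_1\right|^2\right) = -\int_\Omega (\theta_1 + \theta_2)(\theta_1 - \theta_2)^2.
\end{equation*}
The left-hand side is nonnegative and the right-hand side nonpositive, so both vanish; since $\theta_1 + \theta_2 > 0$, this forces $\theta_1 \equiv \theta_2$.

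\textbf{Main obstacle.} The most delicate point is the uniform strict positivity needed at two places: in the approximation argument to rule out a trivial limit, and in the uniqueness argument to legitimize the singular test functions $\theta_i^2/\theta_j$. Both rest on a quantitative strong maximum principle for equations with only $L^\infty$ coefficients under Neumann data on a piecewise $C^3$ boundary; while each ingredient is classical, combining them uniformly in $n$ requires careful application of the Hopf lemma, in particular at the corners of $\partial\Omega$.
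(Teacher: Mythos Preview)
The paper does not prove this statement at all: Theorem~B is quoted from Cantrell and Cosner \cite{CC91} and is used as a black box throughout. So there is no ``paper's own proof'' to compare against.

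That said, your outline is a sound and standard route to the result. The Brezis--Oswald identity you wrote down is correct and is exactly the right tool for uniqueness of positive solutions to sublinear problems of this type; the existence-by-approximation scheme combined with the sign of the principal Neumann eigenvalue of $-\mu\Delta - m$ is also the classical approach. Two remarks. First, the paper ultimately works on a one-dimensional interval $\Omega=(a,b)$, so the concern you raise about the Hopf lemma at corners of a piecewise-$C^3$ boundary evaporates: the boundary consists of two points, strict positivity of a nontrivial nonnegative solution is immediate from ODE uniqueness (if $\theta$ and $\theta'$ both vanish at a point then $\theta\equiv 0$), and the test functions $(\theta_1^2-\theta_2^2)/\theta_i$ are automatically in $H^1$. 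Second, your uniform lower bound along the approximating sequence can be obtained more directly in 1D by the sub/supersolution bracket you mention, since the principal eigenfunction is explicit; there is no need to worry about boundary regularity. In short: your proposal is correct, and in the one-dimensional setting the ``main obstacle'' you flag is not actually an obstacle.
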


We define the total population for \(\mu > 0\) and \(m \in \mathcal{M}(\Omega)\) by

\begin{equation}
F_{\mu}(m):=\int_{\Omega}\theta_{m ,\mu},
\end{equation}
where \(\mathcal{M}\) is the class of resource \(m\) introduced in \cite{L08}.

\begin{equation}
\mathcal{M}(\Omega):= \left\{ m \in L^{\infty}(\Omega), 0 \leq m \leq 1, \frac{1}{|\Omega|}\int_{\Omega}m = m_0\right\}.
\end{equation}

for a fixed \(m_0 \in (0, 1)\). There are several studies \cite{BHL16, L08,CC91,MNP21,MNP20,L06} interested in the optimal total population in the class \(\mathcal{M}\) of resources.

\begin{equation}\label{opt}
\sup_{m \in \mathcal{M}(\Omega)}F_{\mu}(m).
\end{equation}

It is shown in \cite{MNP21} that there exists an optimal control \(m^{*}_\mu\) of the optimization problem by applying the calculus of variations.

\begin{oldthm}\label{exist}
There exists an optimal control \(m^{*}_\mu \in \mathcal{M}(\Omega)\) of \textnormal{(\ref{opt})}.
\end{oldthm}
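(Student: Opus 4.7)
The plan is to apply the direct method in the calculus of variations to $F_\mu$ on $\mathcal{M}(\Omega)$. The set $\mathcal{M}(\Omega)$ is the intersection of the order interval $\{0\le m\le 1\}$ with the affine hyperplane $\{\int_\Omega m = m_0|\Omega|\}$ inside $L^\infty(\Omega)$; both are convex and closed under weak-$*$ convergence, and the set is norm-bounded, so by Banach--Alaoglu $\mathcal{M}(\Omega)$ is weak-$*$ sequentially compact. Pick a maximizing sequence $\{m_n\}\subset\mathcal{M}(\Omega)$ with $F_\mu(m_n)\to\sup_{\mathcal{M}}F_\mu$, and after extraction assume $m_n\rightharpoonup^* m^*$ in $L^\infty(\Omega)$ with $m^*\in\mathcal{M}(\Omega)$. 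The problem then reduces to showing that $F_\mu(m_n)\to F_\mu(m^*)$, which in turn follows once I obtain a sufficiently strong mode of convergence for the equilibria $\theta_{m_n,\mu}$.

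The first step is a uniform regularity bound on the equilibria. Testing (\ref{equibra}) against $(\theta_{m_n,\mu}-1)_+$ yields $\|\theta_{m_n,\mu}\|_{L^\infty}\le\|m_n\|_{L^\infty}\le 1$, so the nonlinearity $\theta_{m_n,\mu}(m_n-\theta_{m_n,\mu})$ is bounded in $L^\infty(\Omega)$ uniformly in $n$. Standard elliptic regularity (interior plus boundary Schauder/$W^{2,p}$ estimates under the Neumann condition on a piecewise $C^3$ boundary) then upgrades this to a uniform $C^{1,\alpha}(\bar\Omega)$ bound. Arzelà--Ascoli gives, along a further subsequence, $\theta_{m_n,\mu}\to\theta^*$ in $C^1(\bar\Omega)$.

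The second step is passing to the limit in the weak formulation
\[
\mu\int_\Omega \nabla\theta_{m_n,\mu}\cdot\nabla\varphi \;=\; \int_\Omega \theta_{m_n,\mu}(m_n-\theta_{m_n,\mu})\,\varphi, \qquad \varphi\in H^1(\Omega).
\]
The gradient term and the quadratic term $\theta_{m_n,\mu}^2\varphi$ converge from the strong convergence of $\theta_{m_n,\mu}$. For the mixed term $\theta_{m_n,\mu}m_n\varphi$, the factor $\theta_{m_n,\mu}\varphi$ converges strongly in $L^1(\Omega)$ while $m_n\rightharpoonup^* m^*$ in $L^\infty(\Omega)$, so the product converges to $\int_\Omega \theta^* m^*\varphi$. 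Hence $\theta^*$ is a weak solution of (\ref{equibra}) with $m=m^*$, and Theorem \ref{uniqueness} identifies $\theta^*=\theta_{m^*,\mu}$; the limit being uniquely determined, the whole maximizing sequence converges. Finally, $C^0$ convergence of $\theta_{m_n,\mu}$ on the bounded domain gives $F_\mu(m_n)=\int_\Omega\theta_{m_n,\mu}\to\int_\Omega\theta_{m^*,\mu}=F_\mu(m^*)$, so $m^*$ realizes the supremum.

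The main obstacle is that $m\mapsto\theta_{m,\mu}$ is nonlinear in the coefficient $m$, so weak-$*$ convergence of $m_n$ by itself does not carry through the product $\theta_{m_n,\mu}m_n$. The decisive analytic input is therefore the uniform $C^{1,\alpha}$ estimate that turns weak-$*$ compactness on the resource side into \emph{strong} compactness on the equilibrium side; without it, one cannot close the limit in the nonlinear term. Verifying that $m^*\in\mathcal{M}(\Omega)$ is comparatively routine, since the pointwise bounds $0\le m\le 1$ and the linear mean constraint $\int_\Omega m=m_0|\Omega|$ are all preserved under weak-$*$ limits.
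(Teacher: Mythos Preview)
Your direct-method argument is correct and is precisely the ``calculus of variations'' approach the paper attributes to \cite{MNP21}; the paper itself only cites this result and does not give a proof. One small point you should make explicit before invoking Theorem~\ref{uniqueness}: the limit $\theta^*$ must satisfy $\theta^*\ge 0$ and $\theta^*\not\equiv 0$ to qualify as the solution of (\ref{equibra}); the former is immediate from uniform convergence, and the latter follows since $\int_\Omega\theta^*=\lim F_\mu(m_n)=\sup_{\mathcal{M}}F_\mu\ge F_\mu(m_0)=m_0|\Omega|>0$.
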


In addition, in \cite{MNP21}, it was proved that a general property of \(m_\mu^*\) called bang-bang follows.

\begin{oldthm}\label{bang}
Let \(m^{*}_\mu \in \mathcal{M}(\Omega)\) be an optimal control of \textnormal{(\ref{opt})}. Then there exists a measurable subset \(E \subset \Omega\) such that
\begin{equation}
m^*_\mu = \chi_E.
\end{equation}
\end{oldthm}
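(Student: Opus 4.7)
My approach is a variational contradiction argument based on a two-point perturbation together with a second-order analysis. Suppose an optimal $m^*_\mu \in \mathcal{M}(\Omega)$ is not bang-bang, so the intermediate set $A := \{0 < m^*_\mu < 1\}$ has strictly positive Lebesgue measure. I will produce an admissible perturbation of $m^*_\mu$ supported in $A$ that strictly increases $F_\mu$, violating optimality.

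The first step is to compute the first and second Gateaux variations of $F_\mu$ at $m^*_\mu$. For a mean-zero $h \in L^\infty(\Omega)$, set $\theta_\epsilon := \theta_{m^*_\mu + \epsilon h,\mu}$, $\theta^* := \theta_0$, $\theta' := \partial_\epsilon \theta_\epsilon|_{\epsilon=0}$ and $\theta'' := \partial^2_\epsilon \theta_\epsilon|_{\epsilon=0}$. Differentiating (\ref{equibra}) once and twice in $\epsilon$ yields
\[
\mu \Delta \theta' + (m^*_\mu - 2\theta^*)\theta' = -\theta^* h, \qquad \mu \Delta \theta'' + (m^*_\mu - 2\theta^*)\theta'' = 2(\theta')^2 - 2\theta' h,
\]
both with homogeneous Neumann conditions. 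I then introduce the adjoint $p$ solving $\mu \Delta p + (m^*_\mu - 2\theta^*)p = -1$ in $\Omega$ with Neumann data; positivity of $p$ follows because $\theta^* > 0$ solves (\ref{equibra}), forcing the principal eigenvalue of the linearized operator $\mu\Delta + (m^*_\mu - 2\theta^*)$ to be strictly negative, so that operator is invertible with maximum-principle-compatible inverse. Two Green's identities then yield the closed-form expressions
\[
F'_\mu(m^*_\mu)[h] = \int_\Omega \theta^* p\, h, \qquad F''_\mu(m^*_\mu)[h,h] = 2\int_\Omega p\, \theta'\, h - 2\int_\Omega p (\theta')^2.
\]

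I next restrict to admissible two-point variations inside $A$. For any pair of disjoint measurable $A_1, A_2 \subset A$ of equal measure, $h := \chi_{A_1} - \chi_{A_2}$ is mean-zero and supported in $A$, and for small $|\epsilon|$ of either sign the perturbed resource $m^*_\mu + \epsilon h$ remains in $\mathcal{M}(\Omega)$. Optimality then forces $F'_\mu(m^*_\mu)[h] = 0$ for every such $h$, which by the first formula above means $\theta^* p \equiv c$ almost everywhere on $A$ for some positive constant $c$ — the Pontryagin-type first-order condition. Local maximality additionally requires $F''_\mu(m^*_\mu)[h,h] \le 0$ along all such admissible directions.

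The contradiction comes from producing some two-point perturbation for which the second variation is strictly positive. Multiplying the linearized equation for $\theta'$ by a well-chosen test function (schematically $p\theta'/\theta^*$), integrating by parts, and inserting the identity $\theta^* p \equiv c$ on $\operatorname{supp}(h) \subset A$, one can recombine $2\int p\theta' h - 2\int p(\theta')^2$ into a coercive quadratic form in $\theta'$ involving a positive multiple of a Dirichlet-type energy of $\theta'$, strictly positive unless $\theta' \equiv 0$; freedom in choosing $A_1, A_2$ inside $A$ guarantees $\theta' \not\equiv 0$ via unique solvability of the linearized problem. This is the main technical obstacle: the raw $F''_\mu$ is a difference of two sign-indefinite integrals, and only the precise interplay between the linearized PDE, the adjoint identity $\theta^* p = c$, and the strict positivity of $\theta^*$ and $p$ converts it into a definite-sign quantity. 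Once that positivity is established, it contradicts the necessary condition $F''_\mu(m^*_\mu)[h,h] \le 0$ and completes the proof.
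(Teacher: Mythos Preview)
The paper does not prove this statement itself; Theorem~\ref{bang} is quoted from \cite{MNP21}, so there is no in-paper argument to compare against. Your outline follows the same variational strategy used there: introduce the adjoint $p$, compute the first and second variations, deduce the switching relation $\theta^{*}p\equiv c$ on the intermediate set $A$ from first-order optimality, and look for a contradiction at second order.

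The gap is in your final paragraph. You assert that testing the linearized equation against ``schematically $p\theta'/\theta^{*}$'' lets one recombine $2\int p\theta' h-2\int p(\theta')^{2}$ into a coercive quadratic form, and you yourself flag this as ``the main technical obstacle'' --- but you never carry out the computation. This step is not routine: the relation $\theta^{*}p=c$ holds only on $A=\operatorname{supp}(h)$, not on all of $\Omega$, so after integrating by parts over $\Omega$ one is left with contributions supported off $A$ where the substitution $p=c/\theta^{*}$ is unavailable and which must be shown to have the right sign or to cancel. The published argument in \cite{MNP21} closes this by a specific algebraic identity, obtained by combining the equations for $\theta^{*}$, $\theta'$, and $p$ simultaneously, that rewrites $F''_{\mu}[h,h]$ as a manifestly nonnegative weighted Dirichlet energy of $\theta'/\theta^{*}$; strict positivity then follows from $\theta'\not\equiv 0$. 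Without exhibiting that identity (or an equivalent one) your proposal stops exactly where the real work begins: the variational framework and the first-order condition are correct, but the decisive sign of the second variation is only claimed, not established.
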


Finally, the bounds of \(F_\mu(m)\) were obtained in \cite{BHL16}.

\begin{oldthm}\label{boundineq}
    For any \(m \in \mathcal{M}(\Omega)\),
    \begin{equation}
        m_0|\Omega| \leq F_\mu(m) \leq 3m_0|\Omega|.
    \end{equation}
\end{oldthm}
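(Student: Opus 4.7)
The plan is to split the result into the two inequalities, since they rely on quite different techniques.

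\textbf{Lower bound.} Write $\theta = \theta_{m,\mu}$, which is strictly positive on $\bar\Omega$ by the strong maximum principle / Harnack inequality applied to (\ref{equibra}), together with the Hopf lemma and $\partial_\nu \theta = 0$ at the boundary. Dividing (\ref{equibra}) by $\theta$ gives $\theta - m = \mu \Delta \theta / \theta$, so integrating and applying Green's identity with the Neumann condition yields
$$\int_\Omega \theta - \int_\Omega m \;=\; \mu \int_\Omega \frac{\Delta \theta}{\theta} \;=\; \mu \int_\Omega \frac{|\nabla \theta|^2}{\theta^2} \;\geq\; 0.$$
Hence $F_\mu(m) \geq m_0|\Omega|$. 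For $m \in L^\infty$ rather than $C^2$ the identity is first derived on smooth approximations of $m$ and then passed to the limit using the continuous dependence of $\theta_{m,\mu}$ implicit in Theorem \ref{uniqueness}.

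\textbf{Upper bound.} Two cheap ingredients are at hand. Testing (\ref{equibra}) against $1$ and using the Neumann condition gives $\int_\Omega \theta^2 = \int_\Omega \theta m$. The maximum principle gives $\|\theta\|_\infty \leq \|m\|_\infty \leq 1$: at any interior maximum $\mu \Delta\theta \leq 0$ forces $\theta \leq m$, while a strict boundary maximum is ruled out by Hopf combined with $\partial_\nu \theta = 0$. These two facts already yield the trivial bound $F_\mu(m) \leq |\Omega|$, which proves the claim in the easy regime $m_0 \geq 1/3$.

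The real difficulty is the regime $m_0 < 1/3$, where \emph{linear} dependence of $F_\mu(m)$ on $m_0$ is essential. A natural first attempt, a Poincar\'e-splitting $\theta = \bar\theta + \tilde\theta$ with $\int_\Omega \tilde\theta = 0$, substitution into the $L^2$ identity, and Cauchy--Schwarz applied to $\int \tilde\theta(m - m_0)$ combined with $\|m - m_0\|_2^2 \leq m_0(1 - m_0)|\Omega|$, produces a quadratic inequality in $\bar\theta$ whose best Young-optimized form gives only $\bar\theta \leq \tfrac12(m_0 + \sqrt{m_0})$, strictly weaker than $3m_0$ as $m_0 \to 0$. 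Closing this gap is the main obstacle. I would expect the sharp constant $3$ to genuinely exploit the one-dimensional structure: writing (\ref{equibra}) as $\mu \theta'' = \theta(\theta - m)$ one has, on any subinterval where $m$ is constant, the conserved quantity $\tfrac12 \mu (\theta')^2 - \tfrac12 m \theta^2 + \tfrac13 \theta^3$. Using the bang-bang reduction of Theorem \ref{bang} to restrict to $m \in \{0,1\}$ and then a rearrangement-type comparison with the extremal configuration in which $m$ is concentrated as a single boundary-adjacent block should, after explicit ODE integration across the support and the complement, pin down the ratio $F_\mu(m)/\int_\Omega m$ and deliver the constant $3$, which is saturated only in a singular limit (e.g.\ $\mu \to 0$ with a single concentrated block). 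This is essentially the flavor of block/advantage analysis that the paper proceeds to develop in later sections.
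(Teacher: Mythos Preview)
This theorem is not proved in the paper: it is quoted from \cite{BHL16} as background, so there is no in-paper argument to compare your proposal against. I can only assess the proposal on its own terms.

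Your lower bound is correct and is the standard argument: dividing (\ref{equibra}) by $\theta$, integrating by parts, and using the Neumann condition gives
\[
\int_\Omega \theta - \int_\Omega m \;=\; \mu \int_\Omega \frac{|\nabla\theta|^2}{\theta^2} \;\geq\; 0.
\]

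The upper bound is a genuine gap. You correctly diagnose that the Poincar\'e/Cauchy--Schwarz route yields at best $\bar\theta \lesssim \sqrt{m_0}$ rather than the required $\bar\theta \lesssim m_0$, and you then outline a fallback plan---reduce to bang-bang $m$ via Theorem~\ref{bang}, then exploit the one-dimensional first integral on a single block---without executing it. Two problems remain. First, ``should \dots\ pin down the ratio and deliver the constant $3$'' is a hope, not a proof; no computation is carried out, and the extremal configuration is not identified. Second, invoking Theorem~\ref{bang} here risks circularity: that result is taken from \cite{MNP21}, which postdates \cite{BHL16}, and you would need to verify that the bang-bang argument in \cite{MNP21} nowhere relies on the very bound you are trying to establish. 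The original proof in \cite{BHL16} is direct and does not pass through any bang-bang reduction; reproducing a self-contained argument of that kind is what is missing.
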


\smallskip
\noindent\textit{1.2.\ Main Results}
\smallskip

Since every \(m_\mu^*\) is of bang-bang type, we can find a sequence of bang-bang type resources \(\{m_n\} \subset \mathcal{M}(\Omega) \cap BV(\Omega)\) such that \(m_n \to m_\mu^*\) in \(L^2(\Omega)\) as \(n \to \infty\). Therefore, it is important to investigate a bang-bang type resource \(m \in \mathcal{M}(\Omega) \cap BV(\Omega)\) in order to identify \(m_\mu^*\).

Let \(\Omega = (a, b)\). For any bang-bang type resource \(m \in \mathcal{M}(\Omega) \cap BV(\Omega)\), assume that there exists a partition \(a = x_0 < x_1 < \cdots < x_r < x_{r+1} = b\) of \(\Omega\) which satisfies that \(\theta_{m, \mu}'(x_i) = 0\), and
\begin{equation}\label{monom}
    m(x) = \chi_{(x'_i, x_{i + 1})}(x) \text{ or } m(x) = \chi_{(x_i, x'_i)}(x) \text{ a.e.}
\end{equation}
for some \(x'_i \in (x_i, x_{i + 1}):=\Omega_i\), for each \(i = 0, 1, \cdots, r\). Then, we say the resource \(m\) and the corresponding population distribution \(\theta_{m, \mu}\) are block-decomposable. In other words, we can analyze \(\theta_{m, \mu}\) by regarding it as the composition of weak solutions of (\ref{equibra}) on subdomains \(\Omega_1, \Omega_2, \cdots, \Omega_r\), as illustrated below.

\begin{figure}[htb!]
    \centering
    \begin{tikzpicture}[scale = 3.4]
        \draw[thick, ->](-0.1, 0)--(2.2, 0) node [anchor=north]{\(x\)};
        \draw[thick, ->](0, -0.1)--(0, 0.9);
        \draw[thick](2.1, -0.025)--(2.1, 0.025);
        \draw[thick](-0.025, 0.8)--(0.025, 0.8);
        \draw[dashed](0.5, 0)--(0.5, 0.8);
        \draw[dashed](1.6, 0)--(1.6, 0.8);
        \draw[dashed](2.1, 0)--(2.1, 0.8);
        \draw[dashed](1, 0)--(1, 0.8);
        \draw[dashed](0, 0.8)--(2.1, 0.8);
        \draw[dashed, blue](0.75, 0.8)--(0.75, 0);
        \draw[dashed, blue](1.3, 0.8)--(1.3, 0);
        \draw[thick] (0.5,0.1) parabola (0.75,0.4);
        \draw[thick] (1,0.7) parabola (0.75,0.4);
        \draw[thick] (1,0.7) parabola (1.3,0.5);
        \draw[thick] (1.6,0.3) parabola (1.3,0.5);
        
        \draw[thick, blue] (0.5,0)--(0.75,0);
        \draw[thick, blue] (0.75,0.8)--(1.3,0.8);
        \draw[thick, blue] (1.3,0)--(1.6,0);

        \node[left] at (0, 0.45) {\(\theta_{m, \mu}\)};
        \node[above, blue] at (1, 0.8) {\(m\)};
        \node[centered] at (0.25, 0.4) {\Large\(\cdots\)};
        \node[centered] at (1.85, 0.4) {\Large\(\cdots\)};
        \node[below] at (2.1, -0.025) {\(1\)};
        \node[left] at (-0.025, 0.8) {\(1\)};
        \node[below] at (-0.2, 0) {\(0\)};
        \node[below] at (0.5, 0) {\(x_i\)};
        \node[below] at (0.75, 0) {\(x'_i\)};
        \node[below] at (1, 0) {\(x_{i + 1}\)};
        \node[below] at (1.3, 0) {\(x'_{i + 1}\)};
        \node[below] at (1.6, 0) {\(x_{i + 2}\)};
    \end{tikzpicture}
    \caption{Block Decomposition of \(\theta_{m, \mu}\)}
    \label{1}
\end{figure}
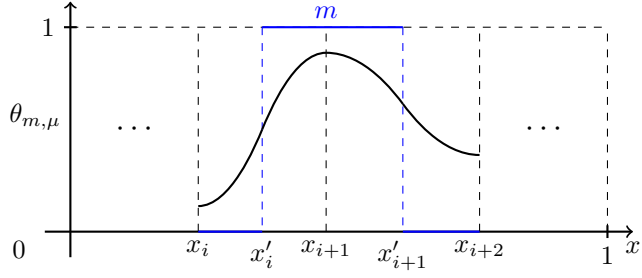

We showed that \(m\) being block decomposable is a necessary condition for \(m\) to be an optimal control of (\ref{opt}).

\begin{theorem}[Block decomposition]\label{blockdecom}
    If \(m \in \mathcal{M}(\Omega) \cap BV(\Omega)\) is not block decomposable, then it is not an optimal control of (\ref{opt}). Furthermore, if \(m\) is additionally bang-bang type, we can find \(\hat{m} \in BV(\Omega)\) such that \(0 \leq \hat{m} \leq 1\),
    \begin{equation}\label{con1}
        \int_\Omega \theta_{m, \mu} = \int_\Omega \theta_{\hat{m}, \mu},
    \end{equation}
    and
    \begin{equation}\label{con2}
        \int_\Omega m > \int_\Omega \hat{m}.
    \end{equation}
\end{theorem}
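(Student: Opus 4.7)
The plan is to localize the perturbation to a single monotonicity interval of $\theta := \theta_{m,\mu}$ and to exploit the Neumann structure at interior critical points of $\theta$ to decouple the problem. First, if $m$ is not bang-bang, Theorem \ref{bang} immediately gives non-optimality and the ``furthermore'' assertion is vacuous, so we may assume $m = \chi_E$ with $E \subset \Omega$ a finite union of intervals (using $m \in BV$). Let $a = x_0 < x_1 < \cdots < x_{r+1} = b$ enumerate the critical points of $\theta$, write $\Omega_i := (x_i, x_{i+1})$, and observe that since $\theta'(x_i) = 0$ on both sides, Theorem \ref{uniqueness} applied to the subdomain $\Omega_i$ identifies $\theta|_{\Omega_i}$ as the unique solution of the Neumann problem on $\Omega_i$ with resource $m|_{\Omega_i}$; in particular $\theta|_{\Omega_i}$ is strictly monotone.

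If $m$ fails block decomposability, then for some index $i$ the restriction $m|_{\Omega_i}$ is not a single block touching an endpoint of $\Omega_i$. Suppose $\theta$ is increasing on $\Omega_i$, so $a_i := \theta(x_i) < b_i := \theta(x_{i+1})$. I would define $\hat m$ by setting $\hat m = m$ on $\Omega \setminus \Omega_i$ and replacing $m|_{\Omega_i}$ by a resource concentrated near $x_{i+1}$, chosen so that the Neumann problem on $\Omega_i$ with $\hat m|_{\Omega_i}$ has a solution $\hat\theta_i$ satisfying the matching conditions $\hat\theta_i(x_i) = a_i$ and $\hat\theta_i(x_{i+1}) = b_i$. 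If such matching holds, the glued function (equal to $\theta$ on $\Omega \setminus \Omega_i$ and to $\hat\theta_i$ on $\Omega_i$) is continuous with vanishing derivative at both junction points, hence a weak solution of the global problem with resource $\hat m$; Theorem \ref{uniqueness} then identifies it with $\theta_{\hat m, \mu}$. Both assertions (\ref{con1}) and (\ref{con2}) thus reduce to the local statements $\int_{\Omega_i} \hat\theta_i = \int_{\Omega_i} \theta$ and $\int_{\Omega_i} \hat m|_{\Omega_i} < \int_{\Omega_i} m|_{\Omega_i}$, and non-optimality of $m$ would follow by strict elliptic comparison: redistributing the saved mass $\int_\Omega m - \int_\Omega \hat m > 0$ to any subset where $\hat m < 1$ yields a competitor with the same total mass as $m$ but strictly larger $F_\mu$.

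The main obstacle is constructing $\hat m|_{\Omega_i}$ to satisfy the two boundary-matching conditions, preservation of the local integral of $\theta$, and strictly reduced mass simultaneously. Changing variables to $s = \theta(x)$ on $\Omega_i$ with inverse $x(s)$, and setting $\tilde m(s) := m(x(s))$, multiplying the PDE by $\theta'$ and integrating yields
\[
\mu\,(\theta'(x))^2 = 2\left[\frac{s^3 - a_i^3}{3} - \int_{a_i}^s \tilde m(\sigma)\,\sigma\,d\sigma\right],
\]
so the Neumann condition $\theta'(x_{i+1}) = 0$ becomes the moment constraint $\int_{a_i}^{b_i} \tilde m(\sigma)\,\sigma\,d\sigma = (b_i^3 - a_i^3)/3$, while the prescribed length $|\Omega_i|$ and prescribed $\int_{\Omega_i} \theta$ become two further integral constraints on $\tilde m$. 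I plan to resolve these by a low-dimensional ansatz for $\tilde m$, for instance $\tilde m = \chi_{(s_0,\,b_i)} + c\,\chi_{(a_i,\,s_0)}$ with two parameters $(s_0, c) \in (a_i, b_i) \times [0,1)$, together with a continuity/intermediate-value argument to hit the constraints as the parameters sweep their ranges. The strict mass reduction should then follow from the observation that concentrating the resource near the high-$\theta$ endpoint $s = b_i$ is the most weight-efficient way to satisfy the moment constraint, whereas the original non-block-decomposable $m|_{\Omega_i}$ necessarily spends mass at smaller values of $\theta$.
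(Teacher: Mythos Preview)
Your localization-and-gluing framework matches the paper's, and the first-integral identity you derive in the $s$-variable is correct and useful. The genuine gap is a dimension count: you impose three independent constraints on $\tilde m$ (the moment identity for the right-hand Neumann condition, the prescribed length $|\Omega_i| = \int_{a_i}^{b_i} ds/\hat\theta'$, and the prescribed integral $\int_{\Omega_i}\hat\theta = \int_{a_i}^{b_i} s\,ds/\hat\theta'$), but your ansatz $\tilde m = \chi_{(s_0,b_i)} + c\,\chi_{(a_i,s_0)}$ carries only two parameters. The moment constraint is linear in $\tilde m$ while the other two involve $1/\sqrt{\cdot}$ and are genuinely independent, so there is no hidden redundancy; an intermediate-value argument on a two-parameter family cannot be expected to hit a codimension-three target, and indeed for generic data it will miss.

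The paper sidesteps this by allowing $\hat m$ to take the \emph{non-bang-bang} values $\theta(x_i)$ and $\theta(x_{i+1})$ on short plateaus adjoining the endpoints of $\Omega_i$: on such a plateau $\hat\theta$ is constant, so both the Neumann condition and the boundary-value matching are automatic, and only the single integral constraint remains, matched by one free parameter (the plateau length). Concretely, the paper does not replace $m|_{\Omega_i}$ by a monotone block in one shot; it locates a local $1$--$0$--$1$ pattern and swaps it in phase space to a $0$--$1$ pattern with the same phase-plane endpoints, shows this shortens the $x$-length, and absorbs the slack in the plateaus. The strict inequality $\int\hat m < \int m$ is then an explicit phase-plane computation (their functions $\zeta,\xi,\eta$), not a rearrangement heuristic; your efficiency remark about the moment constraint is the right intuition but would need a quantitative version once the length and integral constraints are in play. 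Finally, you silently assume $\theta$ has finitely many critical points; the paper proves this separately (Lemma~\ref{finite}), and you will need it to set up the partition $x_0<\cdots<x_{r+1}$.
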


Each block solution of \(\theta_{m, \mu}\) on subdomains can be characterized as \(\theta_{l, b}\), which is a weak solution of
\begin{equation}
    \begin{cases}
        \theta_{l, b}'' + \theta_{l, b}(\chi_{(0, b)} - \theta_{l, b}) = 0 & \text{in } (0, l),
        \\
        \theta_{l, b}' = 0 & \text{on } 0, l.
    \end{cases}
\end{equation}
obtained by translation and rescaling.

We can calculate how much advantage \(\theta_{l, b}\) gets compared to the resource \(\chi_{(0, l)}\) in the subdomain \((0, l)\). We will define it as the advantage function \(H\) of \(l\) and \(b\).
\begin{equation}
    H(l, b) = \int_{(0, l)} \theta_{l, b} - b.
\end{equation}

For given \(m\) and \(\mu\), let \(\theta_{l_1, b_1}, \theta_{l_2, b_2}, \cdots, \theta_{l_r, b_r}\) be the block solutions of \(\theta_{m, \mu}\). Then we have
\begin{equation}
    b_1 + b_2 + \cdots + b_r = \frac{m_0}{\sqrt{\mu}},
\end{equation}
\begin{equation}
    l_1 + l_2 + \cdots + l_r = \frac{1}{\sqrt{\mu}},
\end{equation}
and
\begin{equation}
    \sum_i H(l_i, b_i) = \frac{F_\mu(\theta_{m, \mu}) - m_0}{\sqrt{\mu}}.
\end{equation}

\begin{figure}[htb!]
    \centerline{\includegraphics[scale = 0.65]{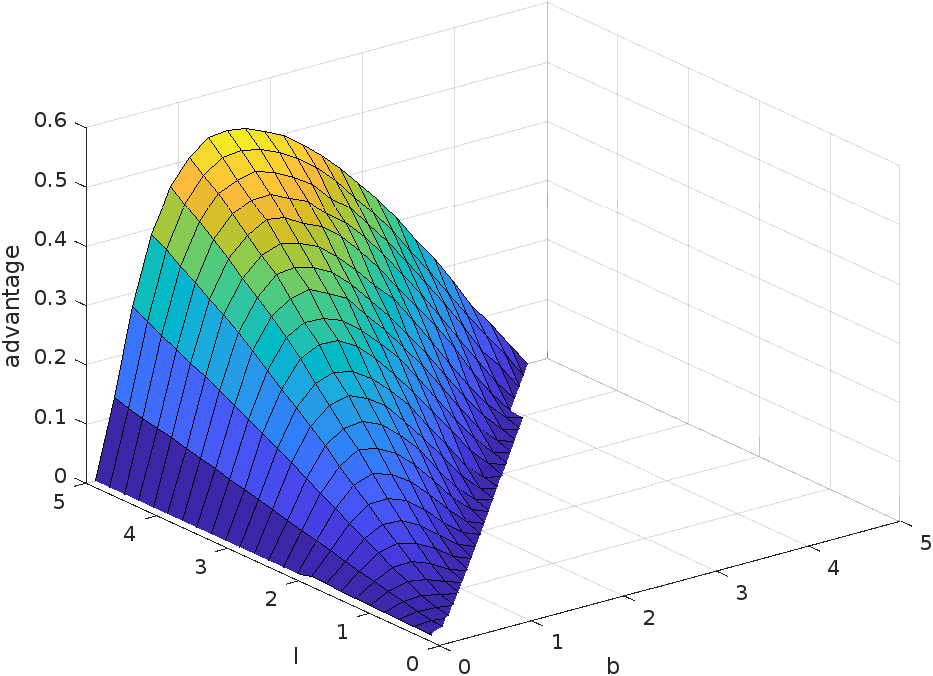}}
    \caption{Advantage Function \(H(l, b)\)}
    \label{2}
\end{figure}

Hence, it is important to investigate the convexity of \(H\) to find an optimal control. As shown in Figure \ref{2}, we can roughly observe that \(H\) is convex upwards on its domain except when \(b/l \ll 1\), \(1 - b/l \ll 1\) or \(l \ll 1\). In other words, the advantage function is convex upwards except the area near the boundary of the domain. Such an observation shows the importance of the analysis of \(H\) when \(b/l \ll 1\), \(1 - b/l \ll 1\) or \(l \ll 1\). In this context, we showed that for any constant \(0 < C_1 < C_2\), there exist sufficiently small \(\alpha\) such that
\begin{equation}\label{ineqad}
    H(l_1, b_1) + H(l_2, b_2) \leq H(l_1 + l_2, b_1 + b_2),
\end{equation}
for any \(l_i\) and \(b_i\) satisfying \(C_1 \alpha < b_i/l_i < C_2\alpha\) for \(i = 1, 2\).

In addition, (\ref{ineqad}) gives an additional result as follows.

\begin{theorem}[Optimal control with small resource]\label{optimal}
    There exists \(m_1(|\Omega|, \mu) \in (0, 1)\) such that \(m_\mu^*(x) = \chi_{(0, b)}(x)\) or \(m_\mu^*(x) = \chi_{(0, b)}(|\Omega| - x)\) if \(m_0 < m_1\) where \(b = m_0|\Omega|\).
\end{theorem}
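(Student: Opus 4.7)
The plan is to combine Theorems \ref{bang} and \ref{blockdecom} with the superlinearity inequality (\ref{ineqad}) to show that any optimizer with sufficiently small $m_0$ must reduce to a single end-concentrated block.

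\textbf{Step 1 (Block framework).} By Theorems \ref{bang} and \ref{blockdecom}, any optimal control $m_\mu^*$ is bang-bang and block-decomposable; write its block parameters as $(l_i, b_i)$ for $i = 1, \ldots, r$. Then $\sum_i l_i = |\Omega|/\sqrt{\mu}$, $\sum_i b_i = m_0|\Omega|/\sqrt{\mu}$, and
$F_\mu(m_\mu^*) = m_0|\Omega| + \sqrt{\mu}\sum_{i=1}^r H(l_i, b_i).$
Because the two configurations $\chi_{(0,b)}$ and $\chi_{(|\Omega|-b, |\Omega|)}$ correspond exactly to $r=1$ (a single block filling $\Omega$, with the resource on one side by (\ref{monom})), the theorem reduces to showing $r=1$ when $m_0 < m_1$.

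\textbf{Step 2 (Eliminating extreme-ratio blocks).} Fix constants $0 < C_1 < 1 < C_2$ (say $C_1 = 1/2$ and $C_2 = 2$) and suppose $r \geq 2$. I would first show that at the optimum every ratio $b_i/l_i$ lies in the moderate range $(C_1 m_0, C_2 m_0)$. For a block with $b_i/l_i$ much larger than $m_0$, the bound $b_i \leq m_0|\Omega|/\sqrt{\mu}$ forces the block to be geometrically small, and the boundary vanishing $H(l,l) = 0$ makes its own contribution $H(l_i, b_i)$ small by continuity; reallocating its length and mass into a neighboring block produces a strictly larger $\sum H$, contradicting optimality. A symmetric perturbation argument using $H(l, 0) = 0$ handles blocks with $b_i/l_i$ much smaller than $m_0$. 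The smallness of $m_0$ enters this step quantitatively, ensuring the absorption gain dominates the block's own lost advantage.

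\textbf{Step 3 (Merging moderate blocks).} With all $b_i/l_i \in (C_1 m_0, C_2 m_0)$, choose $m_1$ small enough that (\ref{ineqad}) applies with $\alpha = m_0 < m_1$ for the fixed constants $C_1, C_2$. Then for any two blocks,
$H(l_1, b_1) + H(l_2, b_2) \leq H(l_1 + l_2, b_1 + b_2),$
so merging them preserves $\sum l_i$ and $\sum b_i$ while not decreasing $\sum H$. A further infinitesimal perturbation upgrades equality to strict inequality, contradicting the optimality of an $r \geq 2$ configuration. Hence $r = 1$.

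\textbf{Step 4 (End-concentration).} When $r = 1$, the unique block is $\Omega$ itself, and (\ref{monom}) forces the resource to one side, yielding $m_\mu^* = \chi_{(0,b)}$ or $\chi_{(|\Omega|-b, |\Omega|)}$ with $b = m_0|\Omega|$, which is precisely the two forms in the statement.

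The main obstacle is Step 2: the superlinearity (\ref{ineqad}) does not apply across blocks whose ratios straddle incompatible scales, so extreme-ratio blocks must be ruled out by a separate perturbation analysis exploiting the boundary vanishing of $H$ at $b/l = 0$ and $b/l = 1$. Obtaining a quantitative reallocation estimate that is uniform in small $m_0$ (so that a single threshold $m_1 = m_1(|\Omega|,\mu)$ works) is the technical heart of the argument, and it is what fixes the value of $m_1$ together with the constraint that $\alpha = m_0$ be small enough for (\ref{ineqad}) to hold.
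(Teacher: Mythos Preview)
Your overall architecture matches the paper's, but two gaps would prevent the argument from closing. First, Step~1 overreaches: Theorem~\ref{bang} only gives $m_\mu^* = \chi_E$ with $E$ measurable, and Theorem~\ref{blockdecom} applies only to $m \in BV(\Omega)$, so you cannot directly conclude that $m_\mu^*$ admits a block decomposition. The paper handles this by approximating $m_\mu^*$ in $L^2$ by bang-bang $m_n \in BV$, applying the block machinery (or the block-refinement of Corollary~\ref{blockrefined} when $m_n$ is not block-decomposable) to each $m_n$, and passing to the limit. That limit step, together with the uniqueness claim, requires a \emph{quantitative} gap $F_\mu(\chi_{(0,b)}) - F_\mu(m_n) \geq C m_0^2(|\Omega| - l_{\max})$ rather than the bare inequality~(\ref{ineqad}); the paper obtains this from the asymptotic expansion of $H$ furnished by Lemma~\ref{analytic} and its corollary.

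Second, your Step~2 is the wrong mechanism for the ratio bound, and replacing it dissolves what you flagged as the main obstacle. The paper does not argue by perturbation or optimality at all here; it invokes the PDE estimate of Lemma~\ref{bound}, which gives $C_1 m_0 \leq \theta_{m,\mu}(x) \leq C_2 m_0$ on all of $\bar\Omega$ once $m_0$ is small. Since each block solution is a restriction of $\theta_{m,\mu}$, integrating the equation over a block forces $b_i/l_i$ into a range comparable to $m_0$, for \emph{every} block-decomposable $m$, optimal or not. With all ratios automatically moderate, the strict, quantitative form of~(\ref{ineqad}) applies to every pair of blocks, and no separate treatment of extreme-ratio blocks is needed.
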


\section{Block Decomposition}

In this section, we investigate the local properties of \(\theta_{m, \mu}\) and \(m \in \mathcal{M}(\Omega) \cap BV(\Omega)\) and prove Theorem \ref{blockdecom}. In particular, we are interested in scale-invariant properties of \(\theta_{m, \mu}\). Let \(\tilde{\theta}(x) := \theta_{m, \mu}(x\sqrt{\mu})\), \(\tilde{m}(x) := m(x\sqrt{\mu})\), and \(\tilde{\Omega} = \frac{1}{\sqrt{\mu}}\Omega\). Then \(\tilde{\theta}\) is a weak solution of
\begin{equation}\label{rescale}
    \begin{cases}
        \tilde{\theta}'' + \tilde{\theta}(\tilde{m} - \tilde{\theta}) = 0 & \text{in } \tilde{\Omega}
        \\
        \dfrac{\partial\tilde{\theta}}{\partial\nu} = 0 & \text{on } \partial\tilde{\Omega}.
    \end{cases}
\end{equation}
Then we can observe that \(\tilde{\theta}\) and \(\tilde{m}\) share the same scale-invariant properties. This fact is useful in proving the following lemmas in this section.

The first property we obtain is the property of critical points of \(\theta_{m, \mu}\).

\begin{lemma}\label{finite}
    For any bang-bang type \(m \in \mathcal{M}(\Omega) \cap BV(\Omega)\), \(0 < \theta_{m, \mu}(x) < 1\) holds for all \(x \in \bar{\Omega}\). Moreover, there are finitely many local maximum and minimum points of \(\theta_{m, \mu}\)
\end{lemma}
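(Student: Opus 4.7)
The plan is to exploit the rescaling $\tilde\theta(x) = \theta_{m,\mu}(\sqrt{\mu}\, x)$ suggested in the paragraph preceding the lemma to reduce to $\mu = 1$, so that $\theta := \tilde\theta$ satisfies $\theta'' = \theta(\theta - m)$ in $\tilde\Omega$ with homogeneous Neumann data. Since $m \in L^\infty$, elliptic regularity promotes $\theta$ to $W^{2,\infty}$; in particular $\theta'$ is Lipschitz and the equation holds pointwise a.e. Because $m$ is bang-bang and lies in $BV$, the level sets $\{m = 0\}$ and $\{m = 1\}$ are finite disjoint unions of open intervals, separated by finitely many transition points.

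For strict positivity $\theta > 0$, I would argue by contradiction: if $\theta(x_0) = 0$, then $x_0$ is a minimum of the nonnegative function $\theta$, so $\theta'(x_0) = 0$ (the Neumann condition covers $x_0 \in \partial\tilde\Omega$). Viewing the equation as an ODE $\theta'' = f(x, \theta)$ with $f$ Lipschitz in $\theta$ and $L^\infty$ in $x$, standard ODE uniqueness from the zero initial data forces $\theta \equiv 0$, contradicting $\theta \not\equiv 0$. For the upper bound $\theta \leq 1$, the constant $1$ is a supersolution (since $m \leq 1$) with matching Neumann data, so weak comparison yields $\theta \leq 1$. Strict inequality $\theta < 1$ is then obtained by a case analysis at a hypothetical contact point $\theta(x_0) = 1$: if $x_0$ lies inside a component of $\{m = 0\}$, then $\theta'' = \theta^2 > 0$ there makes $\theta$ strictly convex, contradicting that $x_0$ is a local maximum; if $x_0$ lies inside a component of $\{m = 1\}$, setting $u = 1 - \theta \geq 0$ gives $u'' = u(1 - u)$ with $u(x_0) = u'(x_0) = 0$, whose unique solution is $u \equiv 0$, so $\theta \equiv 1$ on that component, and $C^{1,1}$ continuation to a neighboring $\{m = 0\}$ component (with matched data $\theta = 1,\ \theta' = 0$ at the transition point) pushes $\theta$ above $1$ by strict convexity, unless no such neighbor exists, in which case $m \equiv 1$ on $\tilde\Omega$, contradicting $m_0 < 1$. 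The residual case where $x_0$ is itself a transition point is dispatched by the same Taylor argument on the adjacent $\{m = 0\}$ interval.

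Once $0 < \theta < 1$ is known, finiteness of critical points follows cleanly: $\theta'' = \theta^2 > 0$ on each component of $\{m = 0\}$ and $\theta'' = \theta(\theta - 1) < 0$ on each component of $\{m = 1\}$, so $\theta$ is strictly convex, respectively strictly concave, on each of finitely many components, hence has at most one critical point per component. Summing gives finitely many local extrema in total. The hard part is the strict upper bound $\theta < 1$: the zero-order coefficient $\theta - m$ of the linearization has no definite sign, so the strong maximum principle does not apply directly, and the technical core is the ODE-uniqueness plus one-sided strict-convexity dichotomy at transition points described above.
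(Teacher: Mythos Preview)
Your argument is correct and takes a genuinely different route from the paper's. For the strict bounds $0<\theta<1$, the paper works in the phase plane: from a hypothetical zero (respectively, a point where $\theta=1$) it locates a nearby point with $\theta>0$ (respectively $\theta<1$) inside a single $\{m=0\}$ (respectively $\{m=1\}$) interval, uses the first integral $(\theta')^2=\tfrac{2}{3}\theta^3+\text{const}$ (respectively $(\theta')^2=\tfrac{2}{3}\theta^3-\theta^2+\text{const}$), and shows the ``travel time'' integral $\int d\theta'/\theta''$ diverges, contradicting the finite domain length. You instead invoke Carath\'eodory ODE uniqueness from zero Cauchy data (for $\theta$ at a zero, and for $u=1-\theta$ at a contact point inside $\{m=1\}$), combined with the strict convexity $\theta''=\theta^2>0$ on $\{m=0\}$ to rule out contact points there and to force $\theta>1$ when propagating out of a $\{m=1\}$ component on which $\theta\equiv 1$. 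Your approach is shorter and more elementary---it avoids computing any explicit first integrals---while the paper's approach has the virtue of introducing exactly the phase-diagram machinery (the curves $(\theta')^2=\tfrac{2}{3}\theta^3+c$ and $(\theta')^2=\tfrac{2}{3}\theta^3-\theta^2+c$) that drives all of Lemma~\ref{mono} and the subsequent block-refinement arguments, so it serves as a warm-up for what follows. For the finiteness of local extrema, your argument (strict convexity/concavity on each of finitely many pieces, hence at most one critical point per piece) coincides with the paper's.
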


\begin{proof}
    By rescaling if necessary, we may let \(\mu = 1\). Moreover, let us denote \(\theta := \theta_{m, \mu}\) for convenience. Let us write
    \begin{equation}
        m = \sum_{i = 1}^n \chi_{I_i} \text{ a.e.}
    \end{equation}
    where \(I_i = [a_i, b_i] \subset \bar \Omega = [a, b]\), and \(b_i < a_{i + 1}\). Moreover, set \(b_0 = a, a_{n + 1} = b\) for convenience.

    Now, assume that there exists \(x_0 \in \bar{\Omega}\) such that \(\theta(x_0) = 0\). If there exists \(I_i\) such that \(x_0 \in I_i\), then \(\theta \equiv 0\) on \(I_i\) by the maximum principle and the Hopf lemma. Thus, \(\theta(a_i) = \theta(b_i) = 0\), and we may assume that there exists \(x_0 \in [b_i, a_{i + 1}]\) such that \(\theta(x_0) = 0\).

    Let us further suppose that we cannot find a pair of points \(x_0, x_1 \in [b_i, a_{i + 1}]\) such that \(\theta(x_0) = 0\) and \(\theta(x_1) > 0\). In other words, if \(\theta(x_0) = 0\) for some \(x_0 \in [b_i, a_{i + 1}]\), \(\theta(x) \equiv 0\) on \(x \in [b_i, a_{i + 1}]\). However, if \(\theta(x) \equiv 0\) on \(x \in [b_i, a_{i + 1}]\), \(\theta(x) = 0\) at the boundaries of both \(I_i\) and \(I_{i + 1}\). Then we obtain a contradiction by applying the Hopf lemma unless \(\theta \equiv 0\) on \(x \in I_i, I_{i + 1}\) which implies that \(\theta \equiv 0\) on \(I_i\) and \(I_{i + 1}\). By iterating this argument, it further implies that \(\theta \equiv 0\) on \(\Omega\).
    
    Thus, there exists \(x_1 \in [b_i, a_{i + 1}]\) such that \(\theta_{m, \mu}(x_1) > 0\). Then we get
    \begin{equation}\label{in}
        |\Omega| \geq |x_1 - x_0| = \left|\int_{x_0}^{x_1} dx\right| \geq \left|\int_{0}^{\theta'(x_1)} \frac{1}{\theta''}d\theta'\right| = \left|\int_{0}^{\theta'(x_1)} \frac{1}{\theta^2}d\theta'\right|
    \end{equation}
    The equation of \(\theta\) and \(\theta'\) for \(x\) between \(x_0\) and \(x_1\) is
    \begin{equation}\label{put}
        (\theta')^2 = \frac{2}{3}\theta^3.
    \end{equation}
    Plugging (\ref{put}) into (\ref{in}), we get
    \begin{equation}
        |\Omega| \geq \int_0^{|\theta'(x_1)|} \frac{1}{(\frac{3}{2})^\frac{2}{3}t^\frac{4}{3}}dt
    \end{equation}
    Note that if \(\theta'(x_1) = 0\), then \(\theta = 0\) between \(x_0\) and \(x_1\). This contradicts that \(\theta(x_1)> 0\). Thus, this integration is improper and diverges to infinity, which is a contradiction. Therefore, there is no \(x_0 \in \bar{\Omega}\) such that \(\theta(x_0) = 0\).
    
    In a similar way, If there exists a point in \(\bar \Omega\) where \(\theta\) attains 1, we can find a pair of points \(x_3, x_4 \in I_i\) such that \(\theta(x_3) = 1\) and \(\theta(x_4) < 1\).
    Then we get
    \begin{equation}\label{in2}
        |\Omega| \geq |x_4 - x_3| = \left|\int_{\theta'(x_3)}^{\theta'(x_4)} \frac{1}{\theta''}d\theta'\right| = \left|\int_{0}^{\theta'(x_4)} \frac{1}{\theta(1 - \theta)}d\theta'\right|
    \end{equation}
    The equation of \(\theta\) and \(\theta'\) for \(x\) between \(x_3\) and \(x_4\) is
    \begin{equation}\label{put2}
        (\theta')^2 = \frac{2}{3}\theta^3 - \theta^2 + \frac{1}{3} = \frac{1}{3}(\theta-1)^2(2\theta + 1).
    \end{equation}
    Plugging (\ref{put2}) into (\ref{in2}), we get
    \begin{equation}
        |\Omega| \geq \left|\int_0^{\theta'(x_4)} \frac{\sqrt{2\theta + 1}}{\sqrt{3}\theta\theta'}d\theta'\right| \geq \int_0^{|\theta'(x_4)|} \frac{1}{\sqrt{3}t}dt
    \end{equation}
    Note that if \(\theta'(x_4) = 0\), then \(\theta = 1\) between \(x_3\) and \(x_4\). This contradicts that \(\theta(x_4) < 1\). Thus, this integration is improper and diverges to infinity, which is a contradiction. Therefore, there is no point in \(\bar{\Omega}\) where \(\theta\) attains 1, and \(0 < \theta < 1\) holds on \(\bar{\Omega}\).

    Now, let us prove that there are only finitely many local maximum and minimum points. It is enough to prove that there exists at most one critical point in \((a_i, b_i)\) or \((b_i, a_{i + 1})\). Since the sign of \(\theta''\) does not change in \((a_i, b_i)\) or \((b_i, a_{i + 1})\), if there exists more than two critical points in these intervals, there exists a constant \(c \in (0, 1)\) such that \(\theta \equiv c\) between these critical points. Then we get \(m \equiv \theta \equiv c\) for some interval, which is a contradiction because \(m\) is bang-bang.
\end{proof}

By the lemma above, we may denote all local extrema of \(\theta_{m, \mu}\) by \(x_1 < x_2 < \cdots < x_r\). Moreover, by setting \(x_0 = a, x_{r + 1} = b\), we observe that \(\theta_{m, \mu}\) is monotone in \((x_i, x_{i + 1})\) for \(0 \leq i \leq r\).

We claim that if \(m\) is not monotone in \((x_i, x_{i+1})\), then \(m\) is not an optimal control of (\ref{opt}). In other words, if \(m\) is not in the form of
\begin{equation}
    m(x) = \chi_{(x', x_{i + 1})}(x) \text{ nor } m(x) = \chi_{(x_i, x')}(x) \text{ a.e}
\end{equation}
in \((x_i, x_{i + 1})\) for some \(x' \in (x_i, x_{i + 1})\), it is not an optimal control.

\begin{lemma}\label{mono}
    Suppose that \(\theta_{m, \mu}\) is monotone in the sub-interval \(I \subset \Omega\) while \(m\) is not monotone in \(I\). Then \(m\) is not an optimal control of (\ref{opt}).
\end{lemma}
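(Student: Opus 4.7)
The plan is to compare \(m\) with the monotone rearrangement \(\hat m\) of \(m|_I\) aligned with the direction of \(\theta_{m,\mu}\) on \(I\), and to show \(F_\mu(\hat m)>F_\mu(m)\). After rescaling using~\eqref{rescale} we set \(\mu=1\) and write \(\theta:=\theta_{m,\mu}\); reflecting \(\Omega\) if necessary, we assume \(\theta\) is non-decreasing on \(I=(\alpha,\beta)\). By Lemma \ref{finite}, \(\theta\) has finitely many critical points, so we may enlarge \(I\) to a maximal monotonicity interval of \(\theta\) without affecting the hypothesis; in that case \(\theta'(\alpha)=\theta'(\beta)=0\) (or the endpoint coincides with \(\partial\Omega\), where Neumann already gives this).

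Let \(\hat m\in\mathcal{M}(\Omega)\) coincide with \(m\) off \(I\) and equal the non-decreasing rearrangement of \(m|_I\) on \(I\); since \(m\) is not monotone on \(I\) we have \(\hat m\neq m\) on a set of positive measure. To prove \(F_\mu(\hat m)>F_\mu(m)\), I would connect \(m\) to \(\hat m\) through a continuous rearrangement path \(\{m_s\}_{s\in[0,1]}\subset\mathcal{M}(\Omega)\) supported in \(I\), each infinitesimal step of which swaps the values of \(m_s\) on two small intervals \(J_1<J_2\) in \(I\) with the heavier side moved toward \(\beta\); a continuous Steiner-type symmetrization restricted to \(I\) furnishes such a path and, critically, preserves monotonicity of \(\theta_s:=\theta_{m_s,\mu}\) on \(I\) throughout. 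Computing the first-order variation,
\[
\frac{d}{ds}F_\mu(m_s)=\int_\Omega \sigma_s(x)\,\dot m_s(x)\,dx,\qquad \sigma_s:=\theta_s\,w_s,
\]
where \(w_s\) solves the adjoint equation \(\mu w_s''+(m_s-2\theta_s)w_s=-1\) in \(\Omega\) with homogeneous Neumann boundary conditions. Provided \(\sigma_s\) is non-decreasing on \(I\), each swap contributes a strictly positive increment by the Hardy--Littlewood rearrangement inequality, and integrating along the path yields \(F_\mu(\hat m)>F_\mu(m)\), contradicting optimality of \(m\).

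The main obstacle is establishing the monotonicity of the sensitivity \(\sigma_s=\theta_sw_s\) on \(I\). The guiding observation is that, on each maximal interval where \(m_s\) is locally constant, differentiating the \(\theta_s\)-equation shows \(\theta_s'\) solves the \emph{homogeneous} adjoint equation \(\mu u''+(m_s-2\theta_s)u=0\); combining a Wronskian-type identity between \(w_s\) and \(\theta_s'\) with the vanishing boundary data \(\theta_s'(\alpha)=\theta_s'(\beta)=0\) should give a sign control on \(\sigma_s'\) on \(I\). Coordinating this structural estimate with the choice of a continuous rearrangement path that keeps \(\theta_s\) monotone on \(I\) for every \(s\) is the most delicate part of the argument, and the Steiner-type continuous symmetrization is chosen precisely to make this compatibility automatic.
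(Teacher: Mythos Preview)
Your approach differs fundamentally from the paper's, which never touches the adjoint state or any rearrangement path: it works entirely in the $(\theta,\theta')$ phase plane, locating points $c_0<c_1<c_2<c_3<c_4$ in $(x_i,x_{i+1})$ with $m=\chi_{(c_1,c_2)}+\chi_{(c_3,c_4)}$ on $(c_0,c_4)$, replacing the oscillating arc $A\to B\to C$ by a shorter arc $A\to D\to C$, and padding the saved $x$-length with constant segments $\theta\equiv\theta(x_i)$ and $\theta\equiv\theta(x_{i+1})$ to build an explicit competitor $\hat m$ (not bang-bang, not in $\mathcal M(\Omega)$) with $\int_\Omega\hat\theta=\int_\Omega\theta$ but $\int_\Omega\hat m<\int_\Omega m$. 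Three elementary one-variable monotonicity computations (for auxiliary functions $\zeta,\xi,\eta$ of a deformation parameter $t$) finish the job.

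Your scheme has a genuine gap precisely at the step you flag. The monotonicity of $\sigma_s=\theta_s w_s$ on $I$ is the entire content of the argument, and the Wronskian heuristic does not establish it. It is true that on each interval where $m_s$ is constant one has $(\theta_s')''+(m_s-2\theta_s)\theta_s'=0$, and the Wronskian $W=w_s\theta_s''-w_s'\theta_s'$ then satisfies $W'=\theta_s'$; but $W$ jumps at every discontinuity of $m_s$ (because $\theta_s''$ does), so the endpoint data $\theta_s'(\alpha)=\theta_s'(\beta)=0$ cannot be propagated across $I$ to control the sign of $\sigma_s'=\theta_s'w_s+\theta_s w_s'$. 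There is likewise no argument for the assertion that a continuous Steiner path ``preserves monotonicity of $\theta_s$ on $I$ throughout'': the map $m\mapsto\theta_{m,\mu}$ is nonlocal and nonlinear, and the maximal monotonicity interval of $\theta_s$ can shrink or shift as $s$ varies, at which point the rearrangement leaves the region where the comparison is supposed to hold. Without both ingredients the Hardy--Littlewood step has no content, and the proposal does not yield a proof.
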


\begin{proof}
    By rescaling if it is necessary, we may let \(\mu = 1\). Also, let us denote \(\theta := \theta_{m, \mu}\) for convenience. Moreover, we can find \(0 \leq i \leq r\) such that \(I \subset (x_i, x_{i + 1})\) where \(x_i\)s are defined as above. Without loss of generality, let us assume that \(\theta\) is increasing in \((x_i, x_{i + 1})\). We observe that there exist \(x_i \leq c_0 < c_1 < c_2 < c_3 < c_4 \leq x_{i + 1}\) such that
    \begin{equation}\label{nmono}
        m(x) = \chi_{(c_1, c_2)}(x) + \chi_{(c_3, c_4)}(x) \text{ a.e}
    \end{equation}
    on \(x \in (c_0, c_4)\). If no such \(c_0, \cdots, c_4\) exist, \(m\) must be in the form of
    \begin{equation}
        m(x) = \chi_{(x_i, d_1)}(x) + \chi_{(d_2, d_3)}(x) \text{ or } m(x) = \chi_{(d_1, d_2)}(x) \text{ a.e}
    \end{equation}
    for some \(x_i \leq d_1 < d_2 < d_3 \leq x_{i + 1}\). However, if \(m(x) = \chi_{(x_i, d_1)}(x) + \chi_{(d_2, d_3)}(x)\), \(\theta\) must decrease in \((x_i, d_1)\) since \(\theta'(x_i) = 0\), and \(\theta'' < 0\) on \((x_i, d_1)\). On the other hand, if \(m(x) = \chi_{(d_1, d_2)}(x)\), \(\theta\) must decrease in \((d_2, x_{i + 1})\) since \(\theta'(x_{i + 1}) = 0\), and \(\theta'' > 0\) on \((d_2, x_{i + 1})\). In either case, a contradiction is induced.
    
    Now, we will draw a phase diagram of \(\theta\) for \(x \in (c_0, c_4)\), and construct a new function in the diagram. It is depicted in the following Figure \ref{3}.
    
\begin{figure}[htb!]
    \centering
    \begin{tikzpicture}[scale = 3.3]
        \draw[thick, ->](-0.1, 0)--(1.1, 0) node [anchor=north]{\(\theta\)};
        \draw[thick, ->](0, -0.1)--(0, 0.9) node [anchor=east]{\(\theta'\)};
        \node[below] at (-0.2, 0) {\(0\)};
        \draw[domain=0.05:0.3, smooth,thick,,variable=\x,black] plot ({\x},{sqrt(1 - (\x - 1)*(\x - 1)) - 0.2});
        \draw[domain=0.3:0.5, smooth,thick,,variable=\x,blue] plot ({\x},{sqrt(1 - (\x - 1)*(\x - 1)) - 0.2});
        \draw[domain=0.5:0.7, smooth,thick,,variable=\x,blue] plot ({\x},{sqrt(1 - \x*\x) - 0.2});
        \draw[domain=0.7:0.95, smooth,thick,,variable=\x,black] plot ({\x},{sqrt(1 - \x*\x) - 0.2});
        \draw[domain=0.3:0.5, smooth,thick,,variable=\x,black] plot ({\x},{sqrt(1 - (\x + 0.4)*(\x + 0.4)) - 0.2});
        \draw[domain=0.5:0.7, smooth,thick,,variable=\x,black] plot ({\x},{sqrt(1 - (\x - 1.4)*(\x - 1.4)) - 0.2});
        \draw[domain=0.4:0.6, smooth,thick,,variable=\x,red] plot ({\x},{sqrt(1 - (\x - 1.2)*(\x - 1.2)) - 0.2});
        \node[left] at (0.3, 0.55) {\(A\)};
        \node[below] at (0.5, 0.25) {\(B\)};
        \node[above] at (0.5, 0.65) {\(D\)};
        \node[right] at (0.7, 0.55) {\(C\)};
        \node[left] at (0.41, 0.36) {\(E\)};
        \node[right] at (0.57, 0.65) {\(F\)};
    \end{tikzpicture}
    \caption{The Phase Diagram of \(\theta\)}
    \label{3}
\end{figure}
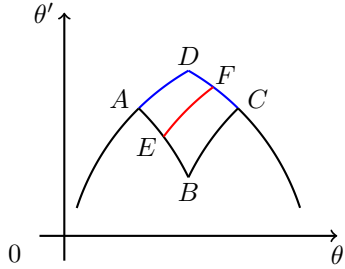

The black line is the phase diagram for the original solution \(\theta\), and we consider a change in the diagram from \(A \to B \to C\) to \(A \to D \to C\) which is depicted as the blue line.

Note that \(A\) corresponds to the point \(x = c_1\), \(B\) corresponds to the point \(x = c_2\), and \(C\) corresponds to the point \(x = c_3\). We may define the curve \((\theta_1, \theta_1')\) which continues \(\{(\theta(x), \theta'(x)): x \in (c_0, c_1)\}\) by
\begin{equation}\label{1c}
    \left\{(\theta_1(x), \theta_1'(x)):(\theta_1')^2 - (\theta'(c_0))^2 = \frac{2}{3}\theta_1^3 - \frac{2}{3}\theta^3(c_0)\right\}
\end{equation}
Hence, we understand \(\theta_1\) as an extension of \(\theta|_{(c_0, c_1)}\) corresponding to the resource \(m \equiv 0\), since \(m = 0\) in \((c_0, c_1)\).

Moreover, we define the curve \((\theta_2, \theta_2')\) which continues \(\{(\theta(x), \theta'(x)): x \in (c_1, c_2)\}\) by,
\begin{equation}\label{2c}
    \left\{(\theta_2(x), \theta_2'(x)):(\theta_2')^2 - (\theta'(c_1))^2 = \frac{2}{3}\theta_2^3 - \theta_2^2 - \left(\frac{2}{3}\theta^3(c_1) - \theta^2(c_1)\right)\right\},
\end{equation}
define the curve \((\theta_3, \theta_3')\) which continues \(\{(\theta(x), \theta'(x)): x \in (c_2, c_3)\}\) by,
\begin{equation}\label{3c}
    \left\{(\theta_3(x), \theta_3'(x)):(\theta_3')^2 - (\theta'(c_2))^2 = \frac{2}{3}\theta_3^3 - \frac{2}{3}\theta^3(c_2)\right\},
\end{equation}
and define the curve \((\theta_4, \theta_4')\) which continues \(\{(\theta(x), \theta'(x)): x \in (c_3, c_4)\}\) by,
\begin{equation}\label{4c}
    \left\{(\theta_4(x), \theta_4'(x)):(\theta_4')^2 - (\theta'(c_3))^2 = \frac{2}{3}\theta_4^3 - \theta_4^2 - \left(\frac{2}{3}\theta^3(c_3) - \theta^2(c_3)\right)\right\}.
\end{equation}
We understand \(\theta_2, \theta_3\) and \(\theta_4\) in a similar manner as we understand \(\theta_1\).

Finally, we will set \(D = (\theta_1(d_-), \theta_1'(d_-)) = (\theta_4(d_+), \theta_4'(d_+))\) as the intersection of (\ref{1c}) and (\ref{4c}) in \(\{\theta' > 0\}\).

Now, we will define a function \(\hat{\theta}_{m, \mu}\) as
\begin{equation}
    \hat{\theta} = 
    \begin{cases}
        \theta(x) & \text{in } \Omega \setminus (x_i, x_{i + 1})
        \\
        \theta(x_i) & \text{in } (x_i, \hat{x}_1]
        \\
        \theta_1(x - \hat{x}_1 + x_i) & \text{in } [\hat{x}_1, \hat{x}_2]
        \\
        \theta_4(x - \hat{x}_3 + x_{i + 1}) & \text{in } [\hat{x}_2, \hat{x}_3]
        \\
        \theta(x_{i + 1}) & \text{in } [\hat{x}_3, x_{i + 1})
    \end{cases}
\end{equation}
for some fixed \(x_i < \hat{x}_1 < \hat{x}_2 < \hat{x}_3 < x_{i+1}\) defined below.

\begin{equation}
    \begin{split}
        \hat{x}_2 = & \hat{x}_1 + d_- - x_i,
        \\
        \hat{x}_3 = & \hat{x}_2 + x_{i + 1} - d_+.
    \end{split}
\end{equation}
To show that \(\hat{\theta}\) is well-defined, it suffices to show that \(\hat{x}_3 = \hat{x}_1 + d_- - d_+ + x_{i+1} - x_i < x_{i+1}\). In other words, it is enough to show that
\begin{equation}\label{gap}
    d_- < d_+.
\end{equation}

To prove this, let us introduce a new curve \((\theta_5, \theta_5')\) that depends on \(t\) by
\begin{equation}\label{5c}
    \left\{(\theta_5, \theta_5'):(\theta_5')^2 - (\theta'(c_2))^2 = \frac{2}{3}\theta_5^3 - \frac{2}{3}\theta^3(c_2) + t\right\},
\end{equation}
which is depicted in Figure \ref{3} as the red curve. Also, denote \(E(t)\) as the intersection of (\ref{2c}) and (\ref{5c}) and \(F(t)\) as the intersection of (\ref{4c}) and (\ref{5c}) in \(\{\theta' > 0\}\) for \(t \geq 0\) such that
\begin{equation}
    0 \leq t \leq T := - \frac{3}{2}(\theta^3(c_0) - \theta^3(c_2)) + (\theta'(c_0))^2 - (\theta'(c_2))^2.
\end{equation}
In addition, let us define \(e_-(t), e_+(t), f_-(t)\) and \(f_+(t): [0, T] \to \mathbb{R}\) satisfy \(E = (\theta_5(e_-), \theta'_5(e_-)) = (\theta_2(e_+), \theta'_2(e_+))\) and \(F = (\theta_5(f_-), \theta'_5(f_-)) = (\theta_3(f_+), \theta'_3(f_+))\).

Note that \(\theta (E) = \sqrt{\theta^2(B) - t}\), and \(\theta (F) = \sqrt{\theta^2(C) - t}\). Moreover, we observe that \(E(0) = B, E(T) = A, F(0) = C\) and \(F(T) = D\). Then, we may introduce a function \(\zeta(t) : [0, T] \to \mathbb{R}\) as
\begin{equation}\label{defzeta}
    \zeta(t) = f_- - e_- - f_+ + e_+.
\end{equation}
We have \(\zeta(0) = 0\) and \(\zeta(T) = d_ - - d_+\). Moreover, it is easy to verify that \(\zeta\) is continuous on \([0, T]\), and differentiable on \((0, T)\) by the Leibniz integral rule. In detail, we get the derivative of \(\zeta\) as below.
\begin{equation}\label{difzeta}
    \begin{split}
        \zeta'(t) = & \frac{d}{dt}\left((f_+ - e_+) + (c_2 - e_-) - (c_3 - f_-)\right)
        \\ = & \frac{d}{dt}\left( \int_{\theta(E)}^{\theta(F)} \frac{1}{\theta_5'} d\theta_5 + \int_{\theta(F)}^{\theta(C)} \frac{1}{\theta_4'} d\theta_4 - \int_{\theta(E)}^{\theta(B)} \frac{1}{\theta_2'} d\theta_2 \right)
        \\ = & \int_{\theta(E)}^{\theta(F)} \frac{d}{dt}\frac{1}{\theta_5'}d\theta_5 = - \int_{\theta(E)}^{\theta(F)} \frac{1}{2(\theta_5')^3}d\theta_5 < 0
    \end{split}
\end{equation}
Thus, \(\zeta(T) = \int_0^T \zeta'(t)dt = d_- - d_+ < 0\), which implies that \(\hat{\theta}\) is well-defined.

Next, we will find \(\hat{x}_1\) such that \(\int_\Omega \hat\theta = \int_\Omega \theta\). By construction, we have
\begin{equation}
    \begin{split}
        \int_\Omega \hat{\theta} - \int_\Omega \theta = & \theta(x_i)(\hat{x}_1 - x_i) + \theta(x_{i + 1})(x_{i + 1} - \hat{x}_3)
        \\
        & - \int_{c_1}^{c_2} \theta_2 dx - \int_{c_2}^{c_3} \theta_3 dx + \int_{c_2}^{d_-} \theta_1 dx + \int_{d_+}^{c_3} \theta_4 dx
    \end{split}
\end{equation}
Note that, by (\ref{gap}), we have \(x_1 \leq \hat{x}_1 \leq x_1 - \zeta(T)\) and
\begin{equation}
    \theta(x_i)(\hat{x}_1 - x_i) + \theta(x_{i + 1})(x_{i + 1} - \hat{x}_4) = (\theta(x_i) - \theta(x_{i + 1}))(\hat{x}_1 - x_1) - \theta(x_{i + 1})\zeta(T).
\end{equation}
Hence, it is enough to prove that
\begin{equation}
    \theta(x_{i + 1})\zeta(T) \leq - \int_{c_1}^{c_2} \theta_2 dx - \int_{c_2}^{c_3} \theta_3 dx + \int_{c_2}^{d_-} \theta_1 dx + \int_{d_+}^{c_3} \theta_4 dx \leq \theta(x_i)\zeta(T)
\end{equation}
to show the existence of \(\hat{x}_1\). If we introduce a function \(\xi(t):[0, T] \to \mathbb{R}\) as
\begin{equation}
    \xi(t) = - \int_{e_-}^{c_2} \theta_2 dx - \int_{c_2}^{c_3} \theta_3 dx + \int_{e_+}^{f_+} \theta_5 dx + \int_{f_-}^{c_3} \theta_4 dx,
\end{equation}
then we have \(\xi(0) = 0\) and \(\xi(T) = - \int_{c_1}^{c_2} \theta_2 dx - \int_{c_2}^{c_3} \theta_3 dx + \int_{c_2}^{d_-} \theta_1 dx + \int_{d_+}^{c_3} \theta_4 dx\). Moreover, it is easy to verify that \(\xi\) is continuous on \([0, T]\), and differentiable on \((0, T)\) by the Leibniz integral rule. In detail, we get the derivative of \(\xi\) as follows.
\begin{equation}
    \xi'(t) =\int_{\theta(E)}^{\theta(F)} -\frac{\theta_5}{2(\theta_5')^3}d\theta_5.
\end{equation}
Since \(\theta(x_i) \leq \theta(x) \leq \theta(x_{i + 1})\) holds on \(x \in [x_i, x_{i + 1}]\), we get
\begin{equation}
    \theta(x_{i + 1})\zeta'(t) \leq \xi'(t) \leq \theta(x_i)\zeta'(t).
\end{equation}
By taking the integral on \([0, T]\), we get \(\theta(x_{i + 1})\zeta(T) \leq \xi(T) \leq \theta(x_i)\zeta(T)\), which is the desired result. To be specific, take \(\hat{x}_1\) as
\begin{equation}
    \hat{x}_1 = x_i + \frac{\xi(T) - \theta(x_{i + 1})\zeta(T)}{\theta(x_{i + 1}) - \theta(x_i)},
\end{equation}
then we have \(\int_\Omega \hat{\theta} = \int_\Omega \theta\).

Now, we observe that \(\hat{\theta}\) is a weak solution of
\begin{equation}
    \begin{cases}
        \hat{\theta}'' + \hat{\theta}(\hat{m} - \hat{\theta}) = 0 & \text{in } \Omega
        \\
        \hat{\theta}' = 0 & \text{on } \partial\Omega,
    \end{cases}
\end{equation}
where \(\hat{m} \in \mathcal{M}(\Omega)\) is defined by
\begin{equation}\label{mhat}
    \hat{m} = 
    \begin{cases}
        m(x) & \text{in } \Omega \setminus (x_i, x_{i + 1})
        \\
        \theta(x_i) & \text{in } (x_i, \hat{x}_1)
        \\
        0 & \text{in } [\hat{x}_1, \hat{x}_2]
        \\
        1 & \text{in } [\hat{x}_2, \hat{x}_3]
        \\
        \theta(x_{i + 1}) & \text{in } (\hat{x}_3, x_{i + 1})
    \end{cases}
\end{equation}

Since \(\int_\Omega \hat \theta = \int_\Omega \theta\), to show that \(m\) is not an optimal control it suffices to show that
\begin{equation}
    \int_\Omega \hat{m} - \int_\Omega m = - \xi(T) - c_2 + c_1 + c_3 - d_+ < 0.
\end{equation}
To do so, let us introduce a function \(\eta(t) : [0, T] \to \mathbb{R}\) as
\begin{equation}\label{defeta}
    \eta(t) = - c_2 + e_+ + c_3 - f_+.
\end{equation}
Then, we have \(\eta(0) = 0\) and \(\eta(T) = - c_2 + c_1 + c_3 - d_+\). Moreover, it is easy to verify that \(\eta\) is continuous on \([0, T]\), and differentiable on \((0, T)\) by the Leibniz integral rule. In detail, we get the derivative of \(\eta\) as below.

\begin{equation}\label{difeta}
    \begin{split}
        \eta'(t) = & \frac{d}{dt}\left(- c_2 + e_+ + c_3 - f_+\right)
        \\ = & -\frac{1}{\theta'(E)}\cdot\frac{d\theta(E)}{dt} + \frac{1}{\theta'(F)}\cdot\frac{d\theta(F)}{dt}
        \\ = & \left[\frac{1}{2\theta\theta'}\right]^F_E
        \\ = & \int_{\theta(E)}^{\theta(F)} \frac{d}{d\theta_5}\left(\frac{1}{2\theta_5\theta'_5}\right)d\theta_5
        \\ = & \int_{\theta(E)}^{\theta(F)} -\frac{1}{2(\theta_5')^3}\left(\frac{(\theta_5')^2}{\theta_5^2} + \theta_5\right)d\theta_5 = \xi'(t) - \int_{\theta(E)}^{\theta(F)} \frac{1}{2\theta_5^2\theta_5'}d\theta_5.
    \end{split}
\end{equation}
Thus, we get \(\eta'(t) - \xi'(t) < 0\). By taking the integral on \([0, T]\), we get \(\eta(T) - \xi(T) < 0\). Therefore,
\begin{equation}
    \int_\Omega \hat{m} - \int_\Omega m = -\xi(T) + \eta(T) < 0,
\end{equation}
which implies that \(m\) is not an optimal control of (\ref{opt}).
\end{proof}

Then, Theorem \ref{blockdecom} follows from Lemma \ref{mono}.

\begin{proof}[Proof of Theorem \ref{blockdecom}]
    Suppose that \(m \in \mathcal{M}(\Omega) \cap BV(\Omega)\) is not block decomposable. If \(m\) is not a bang-bang type resource, it is not an optimal control by Theorem \ref{bang}. Hence, we may assume that \(m\) is bang-bang.

    Next, by Lemma \ref{finite}, we can take all local maximum and local minimum points of \(\theta_{m, \mu}\) as \(a = x_0 < x_1 < \cdots < x_{r + 1} = b\) where \(\Omega = (a, b)\). Then, by Lemma \ref{mono}, \(m\) must be monotone on each sub-intervals \((x_i, x_{i + 1})\) for \(i = 0, 1, \cdots, r\). Hence, (\ref{monom}) holds for \(i = 0, 1, \cdots, r\), which implies that \(m\) is block decomposable. Therefore, a contradiction is deduced.

    On the other hand, if \(m\) is not block decomposable and is bang-bang, we can find \(0 \leq i \leq r\) and \(x_i \leq c_0 < c_1 < c_2 < c_3 < c_4 \leq x_{i + 1}\) which satisfies (\ref{nmono}). Then, we can construct \(\hat m\) as (\ref{mhat}), which satisfies the desired results (\ref{con1}) and (\ref{con2}).
\end{proof}

To investigate general bang-bang type resource \(m \in \mathcal{M}(\Omega) \cap BV(\Omega)\) in the next section, we need further discussion about slight modification \(\hat m\) of \(m\) defined as (\ref{mhat}). Observe that
\begin{equation}
    \int_\Omega \chi_{\{\hat m = 1\}} - \int_\Omega \chi_{\{ m = 1\}} = \eta(T) < 0
\end{equation}
where \(\hat m\) is defined as (\ref{mhat}), and \(\eta\) is defined as (\ref{defeta}). Moreover,
\begin{equation}
    \int_\Omega \chi_{\{\hat m = 0\}} - \int_\Omega \chi_{\{ m = 0\}} = \zeta(T) - \eta(T)
\end{equation}
where \(\zeta\) is defined as (\ref{defzeta}).

By (\ref{difzeta}) and (\ref{difeta}), we have
\begin{equation}
    \zeta'(t) - \eta'(t) = -\int_{\theta(E)}^{\theta(F)} \frac{1}{2(\theta'_5)^3}\left(1 - \frac{(\theta_5')^2}{\theta_5^2} - \theta_5\right)d\theta_5
\end{equation}
for \(0 \leq t \leq T\) where \(\theta_5\) follows (\ref{5c}). Observe that
\begin{equation}
    (\theta_5')^2 \leq \frac{2}{3}\theta_5^3
\end{equation}
holds, so
\begin{equation}
    1 - \frac{(\theta_5')^2}{\theta_5^2} - \theta_5 \geq 1 - \frac{5}{3}\theta_5.
\end{equation}

Thus, \(\zeta'(t) - \eta'(t) \leq 0\) holds if \(\theta_5(F) \leq \theta(c_3) \leq \frac{3}{5}\). In addition, recall the following lemma.
    \begin{lemma}\label{bound}
        For any \(\mu_0 > 0\), there exists \(m_1 > 0\) such that
        \begin{equation}
            C_1m_0 \leq \theta_{m, \mu} \leq C_2m_0.
        \end{equation}
        for some \(C_1(\mu_0), C_2(\mu_0) > 0\) for any \((m_0, \mu) \in (0, m_1)\times(\mu_0, \infty)\).
        
        In other words, \(m_0 \sim \theta_{m, \mu}\) for small enough \(m_0 > 0\).
    \end{lemma}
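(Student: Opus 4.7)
The plan is to prove both bounds simultaneously by rescaling to $w := \theta_{m,\mu}/m_0$, which satisfies $\mu w'' + w(m - m_0 w) = 0$ in $\Omega$ with Neumann boundary conditions, and showing that $w$ is uniformly bounded above and below for $m_0$ small and $\mu \geq \mu_0$.

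For the upper bound, let $M_\infty := \|\theta_{m,\mu}\|_{L^\infty(\Omega)}$. Comparison with the constant sub- and super-solutions $0$ and $1$ gives $0 \leq \theta_{m,\mu} \leq 1$, so the equation forces $\|\theta_{m,\mu}''\|_{L^\infty} \leq M_\infty/\mu_0$. Let $x_0 \in \bar{\Omega}$ be a maximum point of $\theta_{m,\mu}$. Then $\theta_{m,\mu}'(x_0) = 0$ (either as an interior critical point, or via the Neumann condition when $x_0 \in \partial\Omega$). A Taylor expansion with integral remainder (valid since $\theta_{m,\mu}' \in W^{1,\infty}$) yields
\[
    \theta_{m,\mu}(x) \geq M_\infty\Bigl(1 - \frac{(x-x_0)^2}{2\mu_0}\Bigr),
\]
so $\theta_{m,\mu} \geq M_\infty/2$ on an interval of length at least $\ell_0 := \min(\sqrt{\mu_0},|\Omega|)$ inside $\bar{\Omega}$ around $x_0$. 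Integrating and applying Theorem \ref{boundineq} gives $M_\infty \ell_0/2 \leq \int_\Omega \theta_{m,\mu} \leq 3 m_0 |\Omega|$, which yields the upper bound $\theta_{m,\mu} \leq C_2(\mu_0,|\Omega|)\, m_0$.

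For the lower bound, note that $\theta_{m,\mu} > 0$ on $\bar{\Omega}$ by the strong maximum principle, so $w > 0$ and $1/w$ is continuous on $\bar{\Omega}$. Multiplying the equation for $w$ by $1/w$, integrating over $\Omega$, and using the decomposition $w''/w = (w'/w)' + (w'/w)^2$ together with the Neumann condition to discard the boundary terms, we obtain the logarithmic energy identity
\[
    \mu \int_\Omega \Bigl(\frac{w'}{w}\Bigr)^2 dx = F_\mu(m) - m_0 |\Omega| \leq 2 m_0 |\Omega|,
\]
where the last step again invokes Theorem \ref{boundineq}. By Cauchy--Schwarz, for any $x, y \in \Omega$,
\[
    |\log w(x) - \log w(y)| \leq |\Omega|^{1/2}\Bigl(\int_\Omega (w'/w)^2\Bigr)^{1/2} \leq |\Omega|\sqrt{2 m_0/\mu_0}.
\]
Choosing $m_1 \leq \mu_0/(2|\Omega|^2)$ makes the right-hand side $\leq 1$ whenever $m_0 < m_1$, so $\max_{\bar{\Omega}} w \leq e \min_{\bar{\Omega}} w$. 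Since $\int_\Omega w = F_\mu(m)/m_0 \geq |\Omega|$ forces $\max w \geq 1$, we conclude $\min w \geq 1/e$, i.e., $\theta_{m,\mu} \geq (1/e)\, m_0$, giving $C_1 = 1/e$.

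The main obstacle is not really technical but rather identifying the correct change of variables: once one passes to $w = \theta_{m,\mu}/m_0$, the upper bound reduces to a convexity argument driven by the $L^1$ bound from Theorem \ref{boundineq}, while the lower bound hinges on the clean identity $\mu\int_\Omega (w'/w)^2 = F_\mu(m) - m_0|\Omega|$, which converts the $O(m_0)$ excess in Theorem \ref{boundineq} into a pointwise $O(\sqrt{m_0})$ oscillation bound on $\log w$ after Cauchy--Schwarz. The only subtlety is ensuring all manipulations are legitimate at the regularity $\theta_{m,\mu} \in C^{1,\alpha}$ with $\theta_{m,\mu}'' \in L^\infty$, which follows directly from the equation.
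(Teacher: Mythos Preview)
Your proof is correct and takes a genuinely different route from the paper's. The paper rescales to $\mu=1$ and obtains the Harnack-type relation $\theta_{\max}\le c\,\theta_{\min}$ by an ODE comparison: it bounds $(\theta')^2\le\tfrac{2}{3}(\theta^3-\theta_{\min}^3)$ and compares $\theta$ with the solution $\Theta$ of $\Theta''=\Theta^2$, $\Theta(x_1)=\theta_{\min}$, $\Theta'(x_1)=0$, then reads off $\Theta=\theta_{\min}+O(\theta_{\min}^2)$ on the (bounded, after rescaling) domain. Your argument instead extracts the same Harnack relation from the clean identity $\mu\int_\Omega(\theta'/\theta)^2=F_\mu(m)-m_0|\Omega|$, obtained by dividing the equation by $\theta$ and integrating, followed by Cauchy--Schwarz on $\log\theta$. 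This is more elementary, produces explicit constants ($C_1=1/e$), and does not use that $m$ is bang-bang, whereas the paper's phase-plane inequality implicitly does. Incidentally, your logarithmic identity already yields $\max w\le e\min w$, so together with $\int_\Omega w\le 3|\Omega|$ it also gives the upper bound $\max w\le 3e$; your separate Taylor argument for the upper bound is therefore not strictly needed, though it is correct.
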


\begin{proof}
    By rescaling if necessary, we may let \(\mu = 1\). Also, let us denote \(\theta := \theta_{m, \mu}\) for convenience.
    
    Let us denote \(\theta_{min} := \inf_{\bar\Omega} \theta = \theta(x_1)\) and \(\theta_{max} := \sup_{\bar\Omega} \theta = \theta(x_2)\) for some \(x_1, x_2 \in \bar\Omega\). Then we have
    \begin{equation}
        (\theta')^2 \leq \frac{2}{3}(\theta^3 - \theta^3_{min}).
    \end{equation}

    Now, let \(\Theta \in C^2(\bar\Omega)\) be a solution of following initial value problem.
    \begin{equation}
        \begin{cases}
            \Theta '' = \Theta^2 & \textnormal{on } \Omega,
            \\
            \Theta(x_1) = \theta_{min}, \Theta_1'(x_1) = 0.
        \end{cases}
    \end{equation}
    Then, \(\theta(x) \leq \Theta(x)\) holds for any \(x \in \bar\Omega\) since we have
    \begin{equation}
        \Theta'(x) =
        \begin{cases}
            \sqrt\frac{2}{3}(\Theta^3(x) - \Theta^3_{min})^\frac{1}{2} & \textnormal{if } x \geq x_1,
            \\
            -\sqrt\frac{2}{3}(\Theta^3(x) - \Theta^3_{min})^\frac{1}{2} & \textnormal{if } x \leq x_1.
        \end{cases}
    \end{equation}
    In addition, if \(\theta_{min}\) is sufficiently small, by applying the Taylor expansion of \(\Theta\) at \(x_1\), we get \(\Theta = \theta_{min} + O(\theta_{min}^2)\). In other words, there exists a constant \(c > 0\) such that
    \begin{equation}
        \Theta \leq c \theta_{min}.
    \end{equation}
    Therefore, we get
    \begin{equation}
        \theta_{max} \leq \Theta(x_2)\leq c \theta_{min}.
    \end{equation}

    Recall that, by Theorem \ref{boundineq}, we have
    \begin{equation}
        \theta_{min} \leq 3m_0
    \end{equation}
    and
    \begin{equation}
        \theta_{max} \geq m_0.
    \end{equation}
    Thus, we get
    \begin{equation}
        \frac{m_0}{c} \leq \theta_{min} \leq \theta \leq \theta_{max} \leq 3cm_0
    \end{equation}
    as desired.
\end{proof}

By the lemma above, we have \(\theta(c_2) \leq \frac{3}{5}\) if \(m_0\) is sufficiently small. Therefore,
\begin{equation}
    \int_\Omega \chi_{\{\hat m = 0\}} - \int_\Omega \chi_{\{ m = 0\}} = \zeta(T) - \eta(T) < 0
\end{equation}
holds if \(m_0\) is sufficiently small, and we get the following corollary.

\begin{corollary}\label{modif}
    For any \(\mu_0 > 0\), there exists \(m_1 > 0\) such that if \((\mu, m_0) \in (\mu_0, \infty) \times (0, m_1)\), and a bang-bang type resource \(m \in \mathcal{M}(\Omega) \cap BV(\Omega)\) is not block decomposable, there exists \(\hat{m} \in BV(\Omega)\) that satisfies \(0 \leq \hat{m} \leq 1\), \textnormal{(\ref{con1})}, \textnormal{(\ref{con2})},
    \begin{equation}
        \int_\Omega \chi_{\{\hat m = 1\}} - \int_\Omega \chi_{\{ m = 1\}} < 0,
    \end{equation}
    and
    \begin{equation}
        \int_\Omega \chi_{\{\hat m = 0\}} - \int_\Omega \chi_{\{ m = 0\}} < 0.
    \end{equation}
\end{corollary}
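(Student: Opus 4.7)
The plan is to re-use the modified resource $\hat m$ already built in Lemma \ref{mono}, identify the two additional measure-differences with the auxiliary functions $\eta(T)$ and $\zeta(T) - \eta(T)$ introduced just before the statement, and then invoke Lemma \ref{bound} to force the correct signs whenever $m_0$ is sufficiently small.

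The first step is to apply Lemma \ref{mono} to the non-block-decomposable bang-bang $m$. This produces the points $c_0 < \cdots < c_4$ inside some monotone block $(x_i, x_{i+1})$ of $\theta_{m,\mu}$ and the modification $\hat m$ given by (\ref{mhat}), automatically with $0 \leq \hat m \leq 1$ and with conditions (\ref{con1}) and (\ref{con2}). Reading the level sets of $\hat m$ directly off (\ref{mhat}) and using the definition (\ref{defeta}) of $\eta$, straightforward measure-bookkeeping gives
\begin{equation*}
\int_\Omega \chi_{\{\hat m = 1\}} - \int_\Omega \chi_{\{m = 1\}} = \eta(T), \qquad \int_\Omega \chi_{\{\hat m = 0\}} - \int_\Omega \chi_{\{m = 0\}} = \zeta(T) - \eta(T).
\end{equation*}
The first sign, $\eta(T) < 0$, has already been established in the proof of Lemma \ref{mono} via the derivative formula (\ref{difeta}).

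The second step, which contains the real content of the corollary, is to prove $\zeta(T) - \eta(T) < 0$. Using the identity for $\zeta'(t) - \eta'(t)$ displayed just before the statement, together with the stated pointwise bound $(\theta_5')^2 \leq \tfrac{2}{3}\theta_5^3$, I reduce the strict inequality to the uniform pointwise estimate $\theta_5 < 3/5$ along the relevant integration range. Every value of $\theta_5$ arising in the integral is bounded above by $\theta(c_3) \leq \sup_\Omega \theta_{m,\mu}$, so Lemma \ref{bound} supplies a threshold $m_1 = m_1(\mu_0) > 0$ with $\sup_\Omega \theta_{m,\mu} \leq C_2(\mu_0) m_0$ on $(\mu_0, \infty) \times (0, m_1)$. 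Shrinking $m_1$ further if necessary so that $C_2(\mu_0)\, m_1 < 3/5$ yields the needed bound on $\theta_5$, hence $\zeta(T) - \eta(T) < 0$.

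The main obstacle I anticipate is ensuring strictness rather than mere non-positivity: the chain $(\theta_5')^2 \leq \tfrac{2}{3}\theta_5^3 \Rightarrow 1 - (\theta_5')^2/\theta_5^2 - \theta_5 \geq 1 - \tfrac{5}{3}\theta_5$ only gives $\geq 0$ at $\theta_5 = 3/5$, so the threshold $m_1$ must be chosen so that the bound on $\theta_5$ is strictly below $3/5$ on an interval of positive measure. Once this strict separation is arranged by picking $m_1$ with $C_2(\mu_0)\, m_1 < 3/5$, the corollary follows by repackaging the arguments of Lemma \ref{mono} and Lemma \ref{bound} already in place.
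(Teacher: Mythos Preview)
Your proposal is correct and follows essentially the same route as the paper: the paper's argument for this corollary is precisely the text immediately preceding it, which identifies the two measure-differences with $\eta(T)$ and $\zeta(T)-\eta(T)$, uses the bound $(\theta_5')^2 \leq \tfrac{2}{3}\theta_5^3$ to reduce the sign of $\zeta'(t)-\eta'(t)$ to the inequality $\theta_5 \leq 3/5$, and then invokes Lemma~\ref{bound} to force this for small $m_0$. Your handling of strictness (choosing $m_1$ so that $C_2(\mu_0)m_1 < 3/5$) is in fact more careful than the paper's own presentation.
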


Furthermore, by applying this slight modification repeatedly on a bang-bang type block indecomposable \(m\), we get refined resource \(\tilde m\) such that there exists a partition \(a = y_0 < y_1 < \cdots < y_{s+1} = b \) of \(\Omega = (a, b)\) which satisfies \(\{y_0, y_1, \cdots, y_{s + 1}\} \supset \{x_0, x_1, \cdots, x_{r + 1}\}\) where \(\{x_0, x_1, \cdots, x_{r + 1}\}\) are the set of all critical points of \(\theta_{m, \mu}\), \(\theta_{\tilde m, \mu}' (y_i) = 0\) for \(i = 0, 1, \cdots, s+1\), and either of the following must hold for each \(i = 0, 1, \cdots, s\).
\begin{enumerate}
    \item [(i)] There exists \(y'_i \in (y_i, y_{i+1})\) such that
    \begin{equation}
        \tilde m(x) = \chi_{(y'_i, y_{i + 1})}(x) \text{ or } \tilde m(x) = \chi_{(y_i, y'_i)}(x) \text{ a.e.}
    \end{equation}
    \item [(ii)] \(\theta_{\tilde m, \mu}(x)\) is constant on \((y_i, y_{i+1})\).
\end{enumerate}

We say that \(\tilde m\) is a block-refined resource of \(m\). Then it satisfies the following corollary.

\begin{corollary}\label{blockrefined}
    For any \(\mu_0 > 0\), there exists \(m_1 > 0\) such that if \((\mu, m_0) \in (\mu_0, \infty) \times (0, m_1)\), and a bang-bang type resource \(m \in \mathcal{M}(\Omega) \cap BV(\Omega)\) is not block decomposable, then the block-refined resource \(\tilde{m} \in BV(\Omega)\) of \(m\) satisfies that \(0 \leq \tilde{m} \leq 1\),
    \begin{equation}
        \int_\Omega \theta_{m, \mu} = \int_\Omega \theta_{\tilde{m}, \mu},
    \end{equation}
    \begin{equation}
        \int_\Omega m > \int_\Omega \tilde{m},
    \end{equation}
    \begin{equation}
        \int_\Omega \chi_{\{\tilde m = 1\}} - \int_\Omega \chi_{\{ m = 1\}} < 0,
    \end{equation}
    and
    \begin{equation}
        \int_\Omega \chi_{\{\tilde m = 0\}} - \int_\Omega \chi_{\{ m = 0\}} < 0.
    \end{equation}
\end{corollary}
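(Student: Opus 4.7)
The plan is to construct $\tilde m$ by applying the modification of Corollary \ref{modif} on every subinterval between consecutive critical points of $\theta_{m, \mu}$ where $m$ fails to be monotone. By Lemma \ref{finite}, $\theta_{m, \mu}$ has only finitely many critical points $a = x_0 < x_1 < \cdots < x_{r+1} = b$; since $m$ is bang-bang but not block decomposable, the set $S := \{0 \le i \le r : m \text{ is non-monotone on } (x_i, x_{i+1})\}$ is nonempty and finite.

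The key observation making the construction well-posed is that modifications on distinct subintervals are mutually independent: the construction of $\hat m$ in (\ref{mhat}) alters $\hat\theta$ and $\hat m$ only on a single $(x_i, x_{i+1})$ and depends solely on the boundary values $\theta(x_i)$, $\theta(x_{i+1})$ together with the local form of $m$ there. I would therefore define $\tilde m$ by applying the modification simultaneously on every subinterval indexed by $S$, obtaining on each such $(x_i, x_{i+1})$ the representation
\[
    \tilde m = \theta(x_i)\, \chi_{(x_i,\, \hat x_1^{(i)})} + \chi_{(\hat x_2^{(i)},\, \hat x_3^{(i)})} + \theta(x_{i+1})\, \chi_{(\hat x_3^{(i)},\, x_{i+1})},
\]
and $\tilde m = m$ on the remaining subintervals. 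Since the corresponding piecewise-defined function $\tilde\theta$ is continuous with vanishing derivative at every $x_i$ and every $\hat x_j^{(i)}$, it is a weak solution of the PDE with resource $\tilde m$, hence equals $\theta_{\tilde m, \mu}$ by uniqueness (Theorem \ref{uniqueness}). With the refined partition $\{y_0, \ldots, y_{s+1}\} := \{x_0, \ldots, x_{r+1}\} \cup \bigcup_{i \in S}\{\hat x_1^{(i)}, \hat x_3^{(i)}\}$, property (i) holds on $(\hat x_1^{(i)}, \hat x_3^{(i)})$ for $i \in S$ and on every $(x_i, x_{i+1})$ with $i \notin S$, while property (ii) holds on the flat-$\theta$ end pieces.

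To obtain the four inequalities I would aggregate the per-subinterval bounds supplied by Corollary \ref{modif}. Subintervals with $i \notin S$ contribute zero to every difference, while each $i \in S$ contributes the strict inequalities $\int \hat m < \int m$, $\int \chi_{\{\hat m = 1\}} < \int \chi_{\{m = 1\}}$, and $\int \chi_{\{\hat m = 0\}} < \int \chi_{\{m = 0\}}$ together with the equality $\int \hat\theta = \int \theta$. Nonemptiness of $S$ therefore preserves strictness after summing over subintervals.

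The main obstacle I anticipate is ensuring Corollary \ref{modif} applies uniformly on every subinterval in $S$ for a single choice of $m_1$. Its conclusion rests on the pointwise inequality $\theta_{m, \mu}(c_2) \leq 3/5$ used just before Corollary \ref{modif} to sign $\zeta'(t) - \eta'(t)$. By Lemma \ref{bound}, $\theta_{m, \mu} \leq C_2(\mu_0)\, m_0$ on $\Omega$, so choosing $m_1(\mu_0)$ small enough that $C_2(\mu_0)\, m_1 \leq 3/5$ simultaneously enforces the required bound on every subinterval, and the argument goes through uniformly.
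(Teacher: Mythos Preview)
Your plan follows the paper's own argument (the paragraph preceding the corollary): obtain $\tilde m$ from $m$ via the modification of Corollary~\ref{modif} on each subinterval between consecutive critical points of $\theta_{m,\mu}$, then sum the resulting (in)equalities. The independence-of-subintervals observation, the aggregation of the four relations over $i$, and the use of Lemma~\ref{bound} to choose a single $m_1$ that makes $\theta_{m,\mu}\le 3/5$ globally (hence on every subinterval simultaneously) are all correct and are exactly what the paper does.

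There is, however, a gap in asserting that a \emph{single} application of the modification on each $i\in S$ already yields the displayed block-refined form
\[
\tilde m\big|_{(x_i,x_{i+1})}=\theta(x_i)\,\chi_{(x_i,\hat x_1^{(i)})}+\chi_{(\hat x_2^{(i)},\hat x_3^{(i)})}+\theta(x_{i+1})\,\chi_{(\hat x_3^{(i)},x_{i+1})}.
\]
The construction in Lemma~\ref{mono} replaces one detour $A\!\to\!B\!\to\!C$ in the phase diagram by $A\!\to\!D\!\to\!C$; when $m$ has three or more bumps on $(x_i,x_{i+1})$, the trajectory $\theta_4$ does not terminate at $(\theta(x_{i+1}),0)$, so the formula above is not produced in one step. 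This is precisely why the paper says the modification is applied ``repeatedly.'' The fix is immediate and does not change your structure: on each $i\in S$ iterate the modification finitely many times (each pass removes a jump pair of the bang-bang portion, so the process terminates), observing that the constant pieces created at each step are themselves blocks with vanishing derivative at their endpoints, so the next pass applies to the remaining bang-bang part. The strict inequalities of Corollary~\ref{modif} then accumulate over all passes and all $i\in S$, giving the four conclusions for $\tilde m$.
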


\section{Optimal Control of Small Resources}

In this section, we discuss the optimal control under the small resource condition. Let us recall the definition of the advantage function. We consider a weak solution \(\theta_{l, b}\) of the following equation.
\begin{equation}
    \begin{cases}
        \theta_{l, b}'' + \theta_{l, b}(\chi_{(0, b)} - \theta_{l, b}) = 0 & \text{in } (0, l)
        \\
        \theta_{l, b}' = 0 & \text{on } 0, l.
    \end{cases}
\end{equation}

Then, the advantage function \(H(l, b)\) is defined as follows.
\begin{equation}
    H(l, b) = \int_{(0, l)} \theta_{l, b} - b.
\end{equation}

Note that the advantage function can be written as
\begin{equation}
    H(l, b) = lF_{\frac{1}{l^2}}(\frac{b}{l}) - b.
\end{equation}

Next, let us introduce the following lemma.

\begin{lemma}\label{analytic}
    Let \(\Omega = (0, 1)\) and \(m = \chi_{(0, m_0)}\). For any \(\mu_1 > 0\), there exists \(m_1 > 0\) such that we can write \(\theta_{m, \mu}\) as
    \begin{equation}
        \theta_{m, \mu} = m_0 + \sum_{k=1}^\infty\frac{\eta_{k, m}}{\mu^k},
    \end{equation}
    for any \((m_0, 1/\mu) \in (0, m_1)\times(0, 1/\mu_1)\), and \(F_\mu(m)\) is analytic of variables \(1/\mu\) and \(m_0\) on \((0, m_1)\times(0, 1/\mu_1)\).
\end{lemma}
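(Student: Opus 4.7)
The plan is to construct the asymptotic series directly and establish its uniform convergence via a generating-function bound. Setting $\epsilon = 1/\mu$ and substituting the ansatz $\theta = m_0 + \sum_{k \geq 1} \epsilon^k \eta_k$ into $\theta'' + \epsilon\theta(m-\theta) = 0$, matching powers of $\epsilon$ gives $\eta_1'' = -m_0(m-m_0)$ and, for $k \geq 2$,
\[
    \eta_k'' = -(m - 2m_0)\eta_{k-1} + \sum_{\substack{i+j = k - 1 \\ i,j \geq 1}} \eta_i \eta_j,
\]
with the Neumann boundary conditions. The remaining additive constant in each $\eta_k$ is pinned down by the solvability condition at order $k+1$, giving $\alpha_k = m_0^{-1}\bigl(\int m\,\eta_k^0 - \int \sum_{i+j=k,\,i,j\geq 1}\eta_i\eta_j\bigr)$, where $\eta_k^0$ is the mean-zero part of $\eta_k$. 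A routine induction shows that, on each of the subintervals $(0, m_0)$ and $(m_0, 1)$, $\eta_k(x, m_0)$ is polynomial in $(x, m_0)$, so each $a_k(m_0) := \int_0^1 \eta_k\, dx$ is polynomial in $m_0$.

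Next I estimate $C_k := \|\eta_k\|_\infty$. Using the Neumann Green's function bound $\|Gf\|_\infty \leq C_G\|f\|_1$, the inequality $\|m - 2m_0\|_{L^1} \leq 3m_0$, and the explicit estimate $C_1 \leq K_1 m_0^2$ (obtained by integrating $\eta_1'' = -m_0(m-m_0)$ twice and applying the mean-zero solvability relation), I arrive at
\[
    C_k \leq A\, m_0\, C_{k-1} + \tfrac{B}{m_0}\sum_{\substack{i+j = k-1 \\ i, j \geq 1}} C_i C_j
\]
for absolute constants $A, B > 0$; the $1/m_0$ factor reflects the division by $m_0$ in the formula for $\alpha_k$. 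Introducing the generating function $f(\epsilon) := \sum_{k \geq 1} C_k \epsilon^k$, this converts to $f \leq C_1 \epsilon + Am_0 \epsilon f + (B/m_0)\epsilon f^2$. The branch of the corresponding quadratic that vanishes at $\epsilon = 0$ has its nearest singularity at $\epsilon^* \geq 1/(Am_0 + 2\sqrt{BC_1/m_0}) \geq c/\sqrt{m_0}$ for small $m_0$, where $c$ is an absolute constant; this uses $C_1 = O(m_0^2)$ crucially.

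For the given $\mu_1 > 0$, I pick $m_1 < c^2/\mu_1^2$. Then for $(m_0, 1/\mu) \in (0, m_1) \times (0, 1/\mu_1)$ we have $\epsilon = 1/\mu < c/\sqrt{m_0} \leq \epsilon^*$, so the series $m_0 + \sum_k \epsilon^k \eta_k$ converges uniformly in $L^\infty((0,1))$. By construction the sum is a weak solution of the Neumann problem, and Theorem \ref{uniqueness} identifies it with $\theta_{m,\mu}$. Writing $F_\mu(m) = m_0 + \sum_{k \geq 1} \epsilon^k a_k(m_0)$, joint analyticity on $(0, m_1) \times (0, 1/\mu_1)$ then follows from Weierstrass's theorem: each $a_k(m_0)$ is polynomial (hence analytic) in $m_0$, and the series converges uniformly on compact subsets.

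The main obstacle is the factor $1/m_0$ appearing in $\alpha_k$, which obstructs any naive uniform geometric bound of the shape $C_k \leq M^k m_0^{k+1}$ on $C_k$. The square-root branch point of the quadratic in the generating-function step provides exactly the right leverage: the resulting radius of convergence $\sim 1/\sqrt{m_0}$ is large enough to accommodate any prescribed $1/\mu_1$ once $m_0 \lesssim 1/\mu_1^2$, which is what furnishes the uniform $m_1$.
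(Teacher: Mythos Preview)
Your overall strategy is the paper's: build the formal series by matching powers of $1/\mu$, split each $\eta_k$ into a mean-zero piece plus a constant fixed by the next-order solvability condition, then control $\|\eta_k\|_\infty$ through a Catalan-type recursion. Two points need repair.

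First, the recursion you write is not the one your own formula for $\alpha_k$ produces. The factor $1/m_0$ enters through $\alpha_k = m_0^{-1}\bigl(\int m\,\eta_k^0 - \int \sum_{i+j=k}\eta_i\eta_j\bigr)$, so it multiplies the convolution at level $k$, not at level $k-1$. The honest bound is
\[
C_k \ \lesssim\ m_0\,C_{k-1} \;+\; \sum_{\substack{i+j=k-1\\ i,j\ge 1}} C_i C_j \;+\; \frac{1}{m_0}\sum_{\substack{i+j=k\\ i,j\ge 1}} C_i C_j,
\]
and your stated inequality simply drops the last (dominant) term, so it is not a valid majorant. With the correct recursion the generating-function inequality becomes $f \le C_1\epsilon + Am_0\epsilon f + B\epsilon f^2 + (B'/m_0) f^2$, and the radius comes out of order $1/m_0$ rather than $1/\sqrt{m_0}$; this is exactly what the paper obtains, and the rest of your argument for the series representation of $\theta_{m,\mu}$ then goes through unchanged.

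Second, the joint-analyticity step does not follow from Weierstrass in the form you invoke. A locally uniform limit of real polynomials need not be real-analytic (by Stone--Weierstrass every continuous function is such a limit), and your $L^\infty$ bounds on the $\eta_k$, and hence on $a_k(m_0)$, are established only for real $m_0\in(0,m_1)$, so they give no control on a complex neighbourhood. The paper closes this by introducing the coefficient seminorm $\nu(p)=\sum|a_{i,j}|$ on piecewise polynomials $p(x,m_0)=\sum a_{i,j}x^i m_0^j$ and running a second Catalan-type recursion directly on $\nu(\eta_{k,m})$; this yields absolute convergence of the full double power series in $(1/\mu,m_0)$, which is what real-analyticity of $F_\mu(m)$ in both variables actually requires. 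You need either that argument, or an explicit extension of your bounds to complex $m_0$ (for instance via a Bernstein-type inequality, using that $\deg_{m_0} a_k$ grows at most linearly in $k$).
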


Mazari, Nadin, and Privat proved in \cite{MNP20} that \(\theta_{m, \mu}\) is described as a power series of \(\mu\) for fixed \(m_0\) and large enough \(\mu\). We prove Lemma \ref{analytic} by applying the method introduced in \cite{MNP20}.

\begin{proof}
    First, we will construct a power series satisfying
    \begin{equation}\label{exp}
        \theta_{m, \mu} = m_0 + \sum_{k = 1}^\infty \frac{\eta_{k, m}}{\mu^k}
    \end{equation}
    and find its radius of convergence. In order to construct the series, we will identify each \(\eta_{k, m}\) and find an appropriate sequence \(\{\alpha(k)\}\) such that \(\|\eta_{k, m}\|_{L^\infty (\Omega)} \leq \alpha(k)\). Plugging this expansion into the equation of \(\theta_{m, \mu}\) and identifying at order \(\frac{1}{\mu}\), we can observe that \(\eta_{1, m}\) satisfies
    \begin{equation}
        \begin{cases}
            \eta_{1, m}'' + m_0(m - m_0) = 0 & \text{in } \Omega,
            \\
            \eta_{1, m} = 0 & \text{on } \partial\Omega.
        \end{cases}
    \end{equation}
    Note that \(\eta_{1, m}\) can be written as
    \begin{equation}
        \eta_{1, m} = \zeta_{1, m} + \beta_{1, m},
    \end{equation}
    where \(\zeta_{1, m}\) is the unique weak solution of
    \begin{equation}
        \begin{cases}
            \zeta_{1, m}'' + m_0(m - m_0) = 0 & \text{in } \Omega,
            \\
            \zeta_{1, m}' = 0 & \text{on } \partial\Omega,
            \\
            \int_\Omega \zeta_{1, m} = 0.
        \end{cases}
    \end{equation}
    To identify the constant \(\beta_{1, m}\), we will use the fact that
    \begin{equation}
        \int_\Omega \theta_{m, \mu}(m - \theta_{m, \mu}) = 0.
    \end{equation}
    Plugging (\ref{exp}) into the equation of \(\theta_{m, \mu}\) and identifying at order \(\frac{1}{\mu}\), we get
    \begin{equation}
        \beta_{1, m} = \frac{1}{m_0}\int_\Omega \zeta_{1, m}(m - m_0).
    \end{equation}
    Inductively, applying this process at order \(\frac{1}{\mu^k}\) for \(k \in \mathbb{N}\), we get \(\eta_{k, m} = \zeta_{k, m} + \beta_{k, m}\) where \(\zeta_{k, m}\) is the unique weak solution of
    \begin{equation}\label{eqeta}
        \begin{cases}
            \zeta_{1, m}'' + m_0(m - m_0) = 0 \text{ in } \Omega,
            \\
            \zeta_{2, m}'' + \eta_{1, m}(m - 2m_0) = 0 \text{ in } \Omega,
            \\
            \zeta''_{k + 1, m} + (m - 2m_0)\eta_{k, m} - \sum_{l = 1}^{k - 1} \eta_{l, m}\eta_{k - l, m} = 0 \text{ in } \Omega & \text{if } k \geq 2,
            \\
            \zeta'_{k, m} = 0 \text{ on } \partial\Omega,
            \\
            \int_\Omega \zeta_{k, m} = 0,
        \end{cases}
    \end{equation}
    and the constant \(\beta_{k, m}\) satisfies
    \begin{equation}\label{eqbeta}
        \begin{cases}
            \beta_{1, m} = \frac{1}{m_0}\int_\Omega \zeta_{1, m}(m - m_0),
            \\
            \beta_{2, m} = \frac{1}{m_0}\int_\Omega m\zeta_{2, m} - \frac{1}{m_0}\int_\Omega \eta_{1, m}^2,
            \\
            \beta_{k+1, m} = \frac{1}{m_0}\int_\Omega m\zeta_{k + 1, m} - \frac{1}{m_0}\sum_{l = 1}^k \int_\Omega \eta_{l, m}\eta_{k + 1 - l, m} & \text{if } k \geq 2.
        \end{cases}
    \end{equation}
    In addition, let us denote \(\zeta_{0, m} \equiv 0\) and \(\beta_{0, m} = m_0\) for convenience. Then, by applying \(W^{2, p}\) estimation, we obtain
    \begin{equation}
        \|\eta_{k + 1, m}\|_{W^{2, p}(\Omega)} \lesssim \|\eta_{k + 1, m}\|_{L^p (\Omega)} + \left\|(m - 2m_0)\eta_{k, m} - \sum_{l = 1}^{k - 1} \eta_{l, m}\eta_{k - l, m}\right\|_{L^p(\Omega)}.
    \end{equation}

    Now, we will find \(\alpha(k)\) satisfying \(\|\eta_{k, m}\|_{L^\infty(\Omega)} \leq \alpha(k)\) inductively. Observe that,
    \begin{equation}
        \|\eta_{1, m}\|_{L^\infty(\Omega)} \leq \|\zeta_{1, m}\|_{L^\infty(\Omega)} + |\beta_{1, m}| \lesssim \|\zeta_{1, m}\|_{L^\infty(\Omega)} \lesssim m_0^2.
    \end{equation}
    Hence, we may let \(\alpha(1) = m_0\). Now, assume that there exist such \(\alpha(\cdot)\) for \(1, \cdots, k\). Then we have
    \begin{equation}
        \left\|(m - 2m_0)\eta_{k, m} - \sum_{l = 0}^{k - 1} \eta_{l, m}\eta_{k - l, m}\right\|_{L^p(\Omega)} \lesssim \alpha(k) + \sum_{l = 0}^{k - 1} \alpha(l)\alpha(k - l),
    \end{equation}
    where \(\alpha(0) = m_0\). It implies that \(\eta_{k + 1, m} \in L^\infty(\Omega)\). Moreover, note that
    \begin{equation}
        \|\eta_{k + 1, m}\|_{L^p (\Omega)} \leq \|\zeta_{k + 1, m}\|_{L^p(\Omega)} + |\beta_{k + 1, m}|
    \end{equation}
    by the definition, and then applying the Poincaré inequality, we get
    \begin{equation}
        \|\zeta_{k + 1, m}\|_{L^p(\Omega)} \lesssim \|\zeta_{k + 1, m}'\|_{L^p(\Omega)}.
    \end{equation}
    Now, since \(\eta_{k + 1, m}' = \int_0^x \eta_{k+1, m}''(t)dt\), we have
    \begin{equation}
        \|\eta_{k + 1, m}'\|_{L^\infty (\Omega)} \lesssim \|\eta_{k, m}\|_{L^\infty(\Omega)} + \left\|\sum_{l = 0}^k \eta_{l, m}\eta_{k - l, m}\right\|_{L^\infty(\Omega)} \lesssim \alpha(k) + \sum_{l = 1}^{k-1} \alpha(l)\alpha(k - l).
    \end{equation}
    Moreover, in the same way,
    \begin{equation}
        |\beta_{k + 1, m}| \lesssim \alpha(k) + \frac{1}{m_0}\sum_{l = 1}^{k-1} \alpha(l)\alpha(k - l) + \frac{1}{m_0}\sum_{l = 1}^k \alpha(l)\alpha(k + 1 - l).
    \end{equation}
    Without loss of generality, we may assume that \(\alpha(k)\) is increasing in \(k\), then we get
    \begin{equation}
        |\beta_{k + 1, m}| \lesssim \frac{1}{m_0}\sum_{l = 1}^{k} \alpha(l)\alpha(k + 1 - l).
    \end{equation}
    Therefore, we get
    \begin{equation}
        \|\eta_{k + 1, m}\|_{L^\infty(\Omega)} \leq |\beta_{k + 1, m}| + \|\zeta_{k + 1, m}\|_{L^\infty (\Omega)} \lesssim \frac{1}{m_0} \sum_{l = 1}^{k} \alpha(l)\alpha(k + 1 - l).
    \end{equation}
    This inequality guarantees that we can construct \(\alpha(k)\) as
    \begin{equation}
        \alpha(1) = C_0m_0^2, \alpha(k + 1) = \frac{C_1}{m_0} \sum_{l = 1}^{k} \alpha(l)\alpha(k + 1 - l)
    \end{equation}
    for some constant \(C_0, C_1 > 0\). Then we can observe that the sequence \(\{\alpha(k)/m_0^{k+1}\}\) has a form of Catalan sequence which has an order at most \((4C_1)^kk^{-\frac{3}{2}}\). Therefore, the power series \(\sum \alpha(k)x^k\) has a radius of convergence at least \(\frac{1}{4m_0C_1}\), which implies that, by uniqueness of \(\theta_{m, \mu}\), there exists \(m_1 > 0\) such that
    \begin{equation}
        \theta_{m, \mu} = m_0 + \sum_{k = 1}^\infty \frac{\eta_{k, m}}{\mu^k}
    \end{equation}
    holds for \((m_0, 1/\mu) \in (0, m_1) \times (0, 1/\mu_1)\).

    Now, by the Fubini's theorem, we have
    \begin{equation}
        F_\mu(m) = \int_\Omega \theta_{m, \mu} = m_0 + \sum_{k = 1}^\infty \frac{1}{\mu^k}\int_\Omega \eta_{k, m}
    \end{equation}
    for \((m_0, 1/\mu) \in (0, m_1) \times (0, 1/\mu_1)\). To conclude that it is analytic, it suffices to prove that \(\int_\Omega \eta_{k, m}\) can be written as a polynomial of \(m_0\), and that the series is absolutely convergent.

    Inductively, it is easy to see that
    \begin{equation}
    \eta_{k, m} = 
        \begin{cases}
            m_0^{k+1}p_k(x, m_0) & \textnormal{if } x\in (0, m_0),
            \\
            m_0^{k+1}q_k(x, m_0) & \textnormal{if } x\in (m_0, 1).
        \end{cases}
    \end{equation}
    for some polynomial \(p_k, q_k\) of variables \(x\) and \(m_0\). Thus, we have
    \begin{equation}
        \int_\Omega \eta_{k, m} = \sum_{n = k+1}^\infty a_{k, n} m_0^n.
    \end{equation}
    where \(a_{k, n}\) is zero except for finitely many indices.
    
    Now, let us define \(\nu: \mathcal{P}(x, m_0) \to \mathbb{R}_{\geq0}\) as
    \begin{equation}
        \nu\left(\sum_{i, j = 0}^\infty a_{i, j}x^i m_0^j\right) = \sum_{i, j = 0}^\infty \left|a_{i, j}\right|.
    \end{equation}
    In addition, let us consider when \(p(x, m_0)\) is piecewise polynomial on \(\Omega\), which means that there exists a set \(\{I_1, I_2, \cdots, I_t\}\) of almost disjoint intervals such that \(\Omega = \cup_{i = 1}^t I_i\) and \(p(x, m_0) = p_i(x, m_0)\) for \(p_i \in \mathcal{P}(x, m_0)\) \(x \in I_i\). We can expand \(\nu\) as
    \begin{equation}
        \nu(p) = \max\{\nu(p_1), \nu(p_2), \cdots, \nu(p_t)\}.
    \end{equation}
    
    Now, let us find a sequence \(\{\gamma(k)\}\) such that
    \begin{equation}
        \nu(\eta_{k, m}) \leq \gamma(k)
    \end{equation}
    for \(k = 1, 2, \cdots\), so that
    \begin{equation}
        \sum_{n = k+1}^\infty |a_{k, n}| \leq 3\gamma(k),
    \end{equation}
    and
    \begin{equation}
        \sum_{n = k+1}^\infty |a_{k, n}| m_0^n \leq 3m_0^{k+1}\gamma(k).
    \end{equation}

    By (\ref{eqeta}), we have
    \begin{equation}
        \nu(\zeta_{k+1, m}'') \leq 3\gamma(k) + \sum_{l = 1}^{k-1} \gamma(l)\gamma(k-l).
    \end{equation}
    Consequently,
    \begin{equation}
        \nu(\zeta_{k+1, m}') \leq 6\gamma(k) + 2\sum_{l = 1}^{k-1} \gamma(l)\gamma(k-l),
    \end{equation}
    and we can write \(\zeta_{k + 1, m}\) as
    \begin{equation}
        \zeta_{k + 1, m} =
        \begin{cases}
            r_{k + 1}^{(1)}(m_0) + s_{k+1}^{(1)}(x, m_0) & \textnormal{if } x\in (0, m_0)
            \\
            r_{k + 1}^{(2)}(m_0) + s_{k+1}^{(2)}(x, m_0) & \textnormal{if } x\in (m_0, 1)
        \end{cases}
    \end{equation}
    where \(r_{k + 1}^{(1)}, r_{k + 1}^{(2)}, s_{k + 1}^{(1)}, s_{k + 1}^{(2)}\) are polynomials such that
    \begin{equation}
        \nu(s_{k + 1}^{(1)}) \leq 6\gamma(k) + 2\sum_{l = 1}^{k-1} \gamma(l)\gamma(k-l),
    \end{equation}
    and
    \begin{equation}
        \nu(s_{k + 1}^{(2)}) \leq 12\gamma(k) + 4\sum_{l = 1}^{k-1} \gamma(l)\gamma(k-l).
    \end{equation}

    Let us find \(\nu(r_{k + 1}^{(1)})\) and \(\nu(r_{k + 1}^{(2)})\). As \(\zeta_{k+1, m}\) is continuous, we have
    \begin{equation}
        r_{k+1}^{(1)}(m_0) = r_{k+1}^{(2)}(m_0) + s_{k+1}^{(2)}(m_0, m_0) - s_{k+1}^{(1)}(m_0, m_0),
    \end{equation}
    so
    \begin{equation}
        \nu(r_{k + 1}^{(1)}) \leq \nu(r_{k + 1}^{(2)}) + 18\gamma(k) + 6\sum_{l = 1}^{k-1} \gamma(l)\gamma(k-l).
    \end{equation}
    Moreover, as \(\int_\Omega \zeta_{k + 1, m} = 0\), we have
    \begin{equation}
        \begin{split}
            & m_0r_{k+1}^{(1)}(m_0) + (1-m_0)r_{k+1}^{(2)}(m_0)
            \\
            = & r_{k+1}^{(2)}(m_0) + m_0s_{k+1}^{(2)}(m_0, m_0) - m_0s_{k+1}^{(1)}(m_0, m_0)
            \\
            = & -\int_0^{m_0}s_{k+1}^{(1)}(x, m_0)dx - \int_{m_0}^{1}s_{k+1}^{(2)}(x, m_0)dx,
        \end{split}
    \end{equation}
    so we get
    \begin{equation}
        \nu(s_{k+1}^{(1)}) \leq 48\gamma(k) + 16\sum_{l = 1}^{k-1} \gamma(l)\gamma(k-l),
    \end{equation}
    and
    \begin{equation}
        \nu(s_{k+1}^{(2)}) \leq 66\gamma(k) + 22\sum_{l = 1}^{k-1} \gamma(l)\gamma(k-l).
    \end{equation}
    Therefore,
    \begin{equation}
        \nu(\zeta_{k+1, m}) \leq 72\gamma(k) + 24\sum_{l = 1}^{k-1} \gamma(l)\gamma(k-l).
    \end{equation}

    Plugging this result to (\ref{eqbeta}), we get
    \begin{equation}
        \nu(\beta_{k + 1, m}) \leq 72\gamma(k) + 26\sum_{l = 1}^{k-1} \gamma(l)\gamma(k-l).
    \end{equation}

    Finally, as \(\eta_{k+1, m} = \zeta_{k + 1, m} + \beta_{k + 1, m}\) by the definition, we get
    \begin{equation}
        \nu(\eta_{k+1, m}) \leq 144\gamma(k) + 50\sum_{l = 1}^{k-1} \gamma(l)\gamma(k-l),
    \end{equation}
    so we may construct \(\gamma(k)\) as
    \begin{equation}
        \gamma(k + 1) = 144\gamma(k) + 50\sum_{l = 1}^{k-1} \gamma(l)\gamma(k-l),
    \end{equation}
    as desired.

    We observe that it has a form of Catalan sequence which has a order at most \((200)^kk^{-\frac{3}{2}}\). Hence, the series
    \begin{equation}
        \sum_{k = 1}^{\infty} \frac{1}{\mu^k} \int_\Omega \eta_{k, m}
    \end{equation}
    of variable \(\mu\) and \(m_0\) absolutely converges if \(m_0 < \mu_1/200\).
\end{proof}

Using this lemma, we can directly calculate the first derivative of the total population.

\begin{corollary}
    Let \(\Omega = (0, 1)\) and \(m = \chi_{(0, m_0)}\). For any \(\mu_1 > 0\), there exist \(m_1 > 0\) such that,
    \begin{equation}
        \frac{\partial}{\partial m_0}F_\mu(m) = 1 + \frac{2}{3\mu}m_0 + O\left(\frac{m_0^2}{\mu}\right),
    \end{equation}
    and
    \begin{equation}
        \frac{\partial}{\partial \mu}F_\mu(m) = - \frac{1}{3\mu^2}m_0^2 + O\left(\frac{m_0^3}{\mu^2}\right).
    \end{equation}
    for any \((m_0, 1/\mu) \in (0, m_1)\times(0, 1/\mu_1)\).
\end{corollary}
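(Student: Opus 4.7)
The plan is to feed Lemma \ref{analytic} into the problem, extract the $1/\mu$ coefficient of $F_\mu(m)$ in closed form, and absorb everything else into the stated error terms. By Lemma \ref{analytic}, on $(0, m_1)\times(0, 1/\mu_1)$ we have
\begin{equation*}
    F_\mu(m) = m_0 + \sum_{k=1}^\infty \frac{1}{\mu^k}\int_\Omega \eta_{k, m},
\end{equation*}
with joint analyticity in $(m_0, 1/\mu)$, so term-by-term differentiation in either variable is legitimate.

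The central computation is $\int_\Omega \eta_{1, m}$. Since $\eta_{1, m} = \zeta_{1, m} + \beta_{1, m}$ with $\int_\Omega \zeta_{1, m} = 0$, this integral equals $\beta_{1, m}$. Writing $m - m_0 = (1 - m_0)\chi_{(0, m_0)} - m_0\chi_{(m_0, 1)}$ and using the zero-mean normalization of $\zeta_{1, m}$, the defining identity $\beta_{1, m} = \frac{1}{m_0}\int_\Omega \zeta_{1, m}(m - m_0)$ collapses to $\beta_{1, m} = \frac{1}{m_0}\int_0^{m_0}\zeta_{1, m}$. I would then solve the one-dimensional ODE
\begin{equation*}
    \zeta_{1, m}''(x) = -m_0\bigl(\chi_{(0, m_0)}(x) - m_0\bigr)
\end{equation*}
with Neumann boundary conditions and zero average. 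The solution is piecewise quadratic, and a careful but routine integration gives $\beta_{1, m} = \frac{m_0^2(1 - m_0)^2}{3}$. This is the one step that has real content; everything else is bookkeeping.

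Next I would control the tail $\sum_{k \geq 2}\mu^{-k}\int_\Omega \eta_{k, m}$. The Catalan-type recursion established in the proof of Lemma \ref{analytic}, started from $\|\eta_{1, m}\|_{L^\infty} \lesssim m_0^2$, yields $\|\eta_{k, m}\|_{L^\infty} \leq C(C'm_0)^{k-1}m_0^2$ for $(m_0, 1/\mu)$ in the domain of analyticity. Hence
\begin{equation*}
    \sum_{k \geq 2} \frac{1}{\mu^k}\int_\Omega \eta_{k, m} = O\!\left(\frac{m_0^3}{\mu^2}\right),
\end{equation*}
so that $F_\mu(m) = m_0 + \frac{m_0^2(1 - m_0)^2}{3\mu} + O(m_0^3/\mu^2)$ uniformly. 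The analogous bounds on the $m_0$- and $\mu$-derivatives of the tail follow from the same estimates together with the fact that each $\int_\Omega \eta_{k, m}$ is polynomial in $m_0$ of degree $\geq k + 1$, so $\partial_{m_0}$ costs one power of $m_0$ and $\partial_\mu$ costs one power of $1/\mu$.

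Finally, differentiating the leading term and expanding $(1 - m_0)^2 = 1 + O(m_0)$ gives
\begin{equation*}
    \frac{\partial F_\mu}{\partial m_0} = 1 + \frac{2m_0(1 - m_0)(1 - 2m_0)}{3\mu} + O\!\left(\frac{m_0^2}{\mu^2}\right) = 1 + \frac{2m_0}{3\mu} + O\!\left(\frac{m_0^2}{\mu}\right),
\end{equation*}
and similarly $\partial_\mu F_\mu(m) = -\frac{m_0^2(1 - m_0)^2}{3\mu^2} + O(m_0^3/\mu^3) = -\frac{m_0^2}{3\mu^2} + O(m_0^3/\mu^2)$. The main obstacle is the explicit evaluation $\beta_{1, m} = m_0^2(1-m_0)^2/3$; the rest is justified by the analyticity and the Catalan bounds already supplied.
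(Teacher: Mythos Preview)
Your proposal is correct and is precisely the ``direct calculation'' the paper alludes to but does not write out; the paper states the corollary without proof immediately after Lemma~\ref{analytic}. Your key evaluation $\beta_{1,m}=\int_\Omega\eta_{1,m}=m_0^2(1-m_0)^2/3$ checks out, and the tail and derivative bounds follow from the joint analyticity and the Catalan-type estimates $\|\eta_{k,m}\|_{L^\infty}\lesssim m_0^{k+1}(C')^k$ established in the proof of Lemma~\ref{analytic}, together with the fact that $\int_\Omega\eta_{k,m}$ is a polynomial in $m_0$ with lowest-order term $m_0^{k+1}$.
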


Using the corollary above, let us prove Theorem \ref{optimal}.

\begin{proof}[Proof of Theorem \ref{optimal}]
    For any block decomposable \(m \in \mathcal{M}(\Omega) \cap BV(\Omega)\), let us denote the block decomposition of \(\theta_{m, \mu}\) as \(\theta_{l_1, b_1}, \theta_{l_2, b_2}, \cdots, \theta_{l_r, b_r}\). First, we will prove that
    \begin{equation}
        H(l_1, b_1) + H(l_2, b_2) < H(l_1 + l_2, b_1 + b_2)
    \end{equation}
    holds if \(m_0\) is sufficiently small. In other words, we will prove the superlinearity of the advantage function.

    Without loss of generality, we may suppose that \(l_1 \geq l_2\). Next, fix any \(C_0 \in (0, 1)\) and suppose that
    \begin{equation}
        \frac{l_2}{l_1} \geq C_0.
    \end{equation}

    By the definition of the advantage function and the corollary, we have
    \begin{equation}
        H(l_i, b_i) = \frac{1}{3}l_ib_i^2 + O(b_i^3) = \frac{1}{3}l_i^3a_i^2 + O(l_i^3a_i^3)
    \end{equation}
    where \(a_i := l_i/b_i\) for \(i = 1, 2\).
    
    Note that, by applying the Lemma \ref{bound}, we can find constants \(C_1, C_2 > 0\) such that \(C_1 m_0 \leq a_i \leq C_2 m_0\) for any \(i = 1, 2, \cdots, r\). Therefore,
    \begin{equation}
        \begin{split}
            & H(l_1 + l_2, b_1 + b_2) - H(l_1, b_1) - H(l_2, b_2) 
            \\
            = &\frac{2}{3}(l_1 + l_2)b_1b_2 + \frac{1}{3}l_1b_2^2 + \frac{1}{3}l_2b_1^2 + O(l_1^3\max\{a_1, a_2\}^3)
        \end{split}
    \end{equation}
    where \(a_i = \theta_{l_i, b_i}(0)\) for \(i = 1, 2\).

    Hence, it is enough to show that
    \begin{equation}
        \frac{1}{3}l_2b_1^2 + O(l_1^3\max\{a_1, a_2\}^3) = \frac{1}{3}l_2l_1^2a_1^2 + O(l_1^3\max\{a_1, a_2\}^3)
    \end{equation}
    is positive for small enough \(m_0\).

    Recall that we have \(C_1m_0 \leq a_1, a_2 \leq C_2m_0\), so
    \begin{equation}
        \begin{split}
            \frac{1}{3}l_2l_1^2a_1^2 + O(l_1^3\max\{a_1, a_2\}^3)
            \geq l_1^3m_0^2\left(\frac{C_0C_1}{3} + O(m_0)\right) \geq 0
        \end{split}
    \end{equation}
    for small enough \(m_0\). Moreover, for sufficiently small \(m_0\), we get
    \begin{equation}\label{ineq1}
        H(l_1 + l_2, b_1 + b_2) - H(l_1, b_1) - H(l_2, b_2) > \frac{C_0C_1}{6}l_1^3m_0^2 \geq \frac{C_0C_1}{48}(l_1 + l_2)^3m_0^2.
    \end{equation}

    Now, suppose that
    \begin{equation}
        \frac{l_2}{l_1} \leq C_0.
    \end{equation}
    
    Then, by applying the mean value theorem, we get
    \begin{equation}
        \begin{split}
            & H(l_1 + l_2, b_1 + b_2) - H(l_1, b_2) - H(l_2, b_2)
            \\
            = & \left.\frac{\partial H}{\partial l}\right|_{(l_3, b_1)}l_3 + \left.\frac{\partial H}{\partial l}\right|_{(l_1 + l_2, b_3)}b_2 - \frac{1}{3}l_2^3a_2^2 + O(l_2^3a_2^3)
        \end{split}
    \end{equation}
    for some \(l_3 \in (l_1, l_1 + l_2)\) and \(b_3 \in (b_1, b_1 + b_2)\).

    Note that, by simply calculation, we have
    \begin{equation}
        \frac{\partial}{\partial l}H (l, b) = \frac{1}{3}l^2a^2 + O(l^2a^3) = \frac{1}{3}b^2 + O(\frac{b^3}{l})
    \end{equation}
    and
    \begin{equation}
        \frac{\partial}{\partial l}H (l, b) = \frac{2}{3}l^2a + O(l^2a^2) = \frac{2}{3}lb + O(b^2).
    \end{equation}

    Hence,
    \begin{equation}
        \begin{split}
            & \left.\frac{\partial H}{\partial l}\right|_{(l_3, b_1)}l_2 + \left.\frac{\partial H}{\partial l}\right|_{(l_1 + l_2, b_3)}b_2
            \\
            = & \frac{1}{3}l_2b_1^2 + \frac{2}{3}(l_1 + l_2)b_2b_3 + O(l_2\frac{b_1^3}{l_1} + b_2^3).
        \end{split}
    \end{equation}

    Therefore, it suffices to show that
    \begin{equation}\label{result}
        \begin{split}
            & \frac{1}{3}l_2b_1^2 + \frac{2}{3}l_1b_1b_2 - \frac{1}{3}l_2^3a_2^2 + O(l_2\frac{b_1^3}{l_1} + b_2^3)
            \\
            = & \frac{1}{3}l_1^2l_2a_1^2 + \frac{2}{3}l_1^2l_2a_1a_2 - \frac{1}{3}l_2^3a_2^2 + O(l_1^2l_2m_0^3)
        \end{split}
    \end{equation}
    is positive for small enough \(m_0\). Observe that
    \begin{equation}
        \begin{split}
            & \frac{1}{3}l_1^2l_2a_1^2 + \frac{2}{3}l_1^2l_2a_1a_2 - \frac{1}{3}l_2^3a_2^2 + O(l_1^2l_2m_0^3).
            \\
            \geq & l_1^2l_2m_0^2\left(C_1^2 - \frac{C_0^2C_2^2}{3} + O(m_0)\right).
        \end{split}
    \end{equation}
    Let us set \(C_0\) small enough so that
    \begin{equation}
        C_1^2 - \frac{C_0^2C_2^2}{3} > 0.
    \end{equation}
    Then, for small enough \(m_0\), the desired result (\ref{result}) follows. Moreover, for sufficiently small \(m_0\), we get
    \begin{equation}\label{ineq2}
        \begin{split}
            H(l_1 + l_2, b_1 + b_2) - H(l_1, b_1) - H(l_2, b_2) > & \frac{3C_1^2 - C_0C_2^2}{6}l_1^2l_2m_0^2
            \\
            \geq & \frac{3C_1^2 - C_0C_2^2}{24}(l_1 + l_2)^2l_2m_0^2.
        \end{split}
    \end{equation}

    Now, let \(l_{max} := \max\{l_1, l_2, \cdots, l_r\} = l_k\) where \(k \in \{1, 2, \cdots, r\}\). In other words, let the size of the largest block be \(l_{max}\). Observe that
    \begin{equation}
        \sum_{i = 1}^r H(l_i, b_i) \leq \sum_{i = 1}^r l_i^3 a_i^2 \leq C_2^3m_0^3 \sum_{i = 1}^r l_i^3 \leq C_2^3m_0^3 |\Omega| l_{max}^2,
    \end{equation}
    so
    \begin{equation}\label{ineq3}
        F_\mu(\chi_{(0, b)}(x)) - F_\mu(m) \geq H(|\Omega|, |\Omega|m_0) - C_2^3m_0^3 |\Omega| l_{max}^2.
    \end{equation}

    By combining (\ref{ineq1}), (\ref{ineq2}), and (\ref{ineq3}), we observe that there exists small enough constant \(C_3 > 0\) such that
    \begin{equation}\label{block}
        F_\mu(\chi_{(0, b)}(x)) - F_\mu(m) \geq C_3m_0^2(|\Omega| - l_{max}).
    \end{equation}

    Now, let us consider any bang-bang type \(m \in \mathcal{M}(\Omega)\cap BV(\Omega)\) which is not block decomposable. Then, we have a block-refined resource \(\tilde{m} \in L^\infty(\Omega)\) of \(m\). By Corollary \ref{blockrefined}, we have
    \begin{equation}
        \int_\Omega \theta_{m, \mu} = \int_\Omega \theta_{\tilde{m}, \mu}
    \end{equation}
    and
    \begin{equation}
        \int_\Omega \tilde{m} < \int_\Omega m.
    \end{equation}
    
    By the definition of block-refined resources, we have a partition \(0 = x_0 < x_1 < \cdots < x_{r + 1} = |\Omega|\) of \(\Omega\) such that the restriction \(\tilde{m}_i\) of \(\tilde{m}\) on \(\Omega_i = (x_{i - 1}, x_i)\) becomes a monotone characteristic function or constant function. By rearranging the index, we may assume that \(\tilde{m}_1,\tilde{m}_2, \cdots,\tilde{m}_s\) are monotone characteristic function on \(\Omega_1, \Omega_2, \cdots, \Omega_s\) respectively, and \(\tilde{m}_{s+1},\tilde{m}_{s+2}, \cdots,\tilde{m}_r\) are constant function on \(\Omega_{s+1}, \Omega_{s+2}, \cdots, \Omega_r\) respectively.

    Let us write \(\tilde{l}_i := |\Omega_i|\) and \(\tilde{b}_i := \int_{\Omega_i}\tilde{m}_i\) Then, we observe that
    \begin{equation}
        \begin{split}
            \int_\Omega \theta_{\tilde{m}, \mu} = & \int_\Omega\tilde{m} + \sum_{i=1}^s H(\tilde{l}_i,\tilde{b}_i)
            \\
            < & \int_\Omega\tilde{m} + \sum_{i=1}^r H(\tilde{l}_i,\tilde{b}_i)
            \\
            < & \int_\Omega\tilde{m} + H\left(|\Omega|, \int_\Omega\tilde{m}\right).
        \end{split}
    \end{equation}

    Writing \(\tilde{b} := \int_\Omega\tilde{m}\), we can observe that
    \begin{equation}
        \int_\Omega\tilde{m} + H\left(|\Omega|,\tilde{b}\right) = \int_\Omega \theta_{\chi_{(0,\tilde{b})}, \mu},
    \end{equation}
    so we get
    \begin{equation}
        \begin{split}
            F_\mu(m) \leq & \int_\Omega \theta_{\tilde{m}, \mu} < \int_\Omega \theta_{\chi_{(0,\tilde{b})}, \mu}
            \\
             < & \int_\Omega \theta_{\chi_{(0, b)}, \mu} = F_\mu(\chi_{(0, b)}(x)) = F_\mu(\chi_{(0, b)}(|\Omega| - x)).
        \end{split}
    \end{equation}
    
    Now, let \(\tilde{l}_{max} := \max\{\tilde{l}_1,\tilde{l}_2, \cdots,\tilde{l}_r\} =\tilde{l}_k\) where \(k \in \{1, 2, \cdots, r\}\). In other words, we suppose that \(\Omega_k =: \Omega_{max}\) is the largest block among \(\Omega_1, \Omega_2, \cdots, \Omega_r\). Observe that,
    \begin{equation}
        \begin{split}
            \int_\Omega \theta_{m, \mu} = & \int_\Omega \theta_{\tilde{m}, \mu}
            \\
            < & \int_\Omega\tilde{m} + \sum_{i = 1}^rH(\tilde{l}_i,\tilde{b}_i)
            \\
            \leq & \int_\Omega\tilde{m} + H(\tilde{l}_{max},\tilde{b}_{max}) + H\left(|\Omega| -\tilde{l}_{max}, \int_\Omega\tilde{m} -\tilde{b}_{max}\right).
        \end{split}
    \end{equation}
    where \(\tilde{b}_{max} :=\tilde{b}_k\)

    By (\ref{ineq1}), (\ref{ineq2}) and (\ref{ineq3}), we can make the constant \(C_3 > 0\) smaller, if needed, so that
    \begin{equation}\label{notblock}
        F_\mu(\chi_{(0, b)}(x)) - F_\mu(m) \geq C_3m_0^2(|\Omega| - \tilde{l}_{max})
    \end{equation}
    also holds.
    
    Finally, let us find every \(m_\mu^*\) by using (\ref{block}) and (\ref{notblock}). Note that we can find a sequence of bang-bang type resources \(\{m_n\} \subset \mathcal{M}(\Omega)\cap BV(\Omega)\) such that \(m_n \to m_\mu^*\) in \(L^2(\Omega)\), and \(\theta_{m_n, \mu} \to \theta_{m_\mu^*, \mu}\) in \(L^2(\Omega)\). Therefore,
    \begin{equation}
        F_{\mu}(m_\mu^*) = \lim_{n \to \infty} F_\mu (m_n) \leq F_\mu(\chi_{(0, b)}(x)) = F_\mu(\chi_{(0, b)}(|\Omega| - x)).
    \end{equation}
    Thus, we conclude that \(\chi_{(0, b)}(x)\) and \(\chi_{(0, b)}(|\Omega| - x)\) are optimal controls.

    Furthermore, let us prove that there are no other optimal controls. We may assume that, by taking a subsequence if it is necessary, \(\{m_n\}\) is either a sequence of block decomposable resources or a sequence of block in-decomposable resources. In the former case, by (\ref{block}), the size of the largest block must converge to \(|\Omega|\), which implies that \(m_n \to \chi_{(0, b)}(x)\) or \(\chi_{(0, b)}(|\Omega| - x)\) in \(L^2(\Omega)\).

    In the same way, in the latter case, we get \(\tilde{m}_n \to \chi_{(0, b)}(x)\) or \(\chi_{(0, b)}(|\Omega| - x)\) in \(L^2(\Omega)\). Therefore, it suffices to show that \(\tilde{m}_n - m_n \to 0\) in \(L^2(\Omega)\) as \(n \to \infty\).

    \begin{figure}[htb!]
    \centering
    \begin{tikzpicture}[scale = 3.4]
        \draw[thick, ->](-0.1, 0)--(1.1, 0) node [anchor=north]{\(\theta\)};
        \draw[thick, ->](0, -0.1)--(0, 0.9) node [anchor=east]{\(\theta'\)};
        \node[below] at (-0.2, 0) {\(0\)};
        \draw[domain=0.1:0.5, smooth,thick,,variable=\x,black] plot ({\x},{sqrt(0.64 - (\x - 0.9)*(\x - 0.9))});
        \draw[domain=0.5:0.9, smooth,thick,,variable=\x,black] plot ({\x},{sqrt(0.64 - (\x - 0.1)*(\x - 0.1))});
        \node[below] at (0.1, 0) {\(A\)};
        \node[below] at (0.9, 0) {\(C\)};
        \node[above] at (0.5, 0.7) {\(B\)};
        \draw[thick, blue](0.5, 0)--(0.5, 0.69);
        \draw[domain=0.3:0.68, smooth,thick,,variable=\x,red] plot ({\x},{sqrt(0.36 - (\x - 0.9)*(\x - 0.9))});
        \draw[domain=0.32:0.7, smooth,thick,,variable=\x,red] plot ({\x},{sqrt(0.36 - (\x - 0.1)*(\x - 0.1))});
        \node[below] at (0.5, 0) {\(D\)};
        \node[right] at (0.54, 0.45) {\(E\)};
        \node[below] at (0.3, 0) {\(F\)};
        \node[below] at (0.7, 0) {\(G\)};
        \node[above, left] at (0.32, 0.58) {\(H\)};
        \node[above, right] at (0.68, 0.58) {\(I\)};
    \end{tikzpicture}
    \caption{The Phase Diagram of \(\theta_{\tilde{m}_n, \mu}\)}
    \label{5}
    \end{figure}
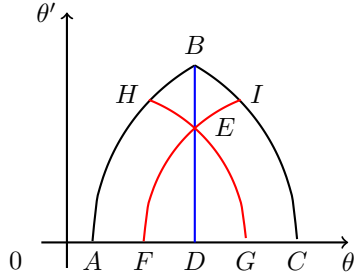
    
    Let us draw the phase diagram of the largest block of \(\tilde{m}_n\) as above. The black line is the phase diagram of \(\tilde{m}_n\), \(A\) is starting point which corresponds to \(x = \tilde{c}_0\), and \(C\) is the end point of the diagram which corresponds to \(x = \tilde{c}_2\). Let \(B\) be the critical point of the diagram which corresponds to \(x = \tilde{c}_1\). We may define the curve \((\theta_1, \theta_1')\) which continues \(\{(\theta_{\tilde{m}, \mu}(x), \theta_{\tilde{m}, \mu}'(x)) : x \in (\tilde{c}_0, \tilde{c}_1)\}\) by
    \begin{equation}
        \left\{ (\theta_1(x), \theta_1'(x)) : (\theta_1')^2 = \frac{2}{3}\theta_1^3 - \frac{2}{3}\theta^3(\tilde{c}_0) \right\}.
    \end{equation}
    Moreover, we may define the curve \((\theta_2, \theta_2')\) which continues \(\{(\theta_{\tilde{m}, \mu}(x), \theta_{\tilde{m}, \mu}'(x)) : x \in (\tilde{c}_1, \tilde{c}_2)\}\) by
    \begin{equation}
        \left\{ (\theta_2(x), \theta_2'(x)) : (\theta_2')^2 = \frac{2}{3}\theta_2^3 - \theta_2^2 - \left(\frac{2}{3}\theta^3(\tilde{c}_2) - \theta^2(\tilde{c}_2)\right) \right\}.
    \end{equation}
    Finally, note that \(\tilde{c}_2 - \tilde{c}_1 \to |\Omega|\) as \(n \to \infty\).
    
    Now, let \(D\) the foot of the perpendicular of \(B\) onto \(\theta\) axis. Take any point \(D\) on the line segment between \(B\) and \(D\) which is depicted as the blue line on the figure above. Next, let us define two curves \((\theta_3, \theta_3')\) and \((\theta_4, \theta_4')\) by
    \begin{equation}\label{cur1}
        \left\{ (\theta_3, \theta_3') : (\theta_3')^2 - (\theta'(E))^2 = \frac{2}{3}\theta_3^3 - \frac{2}{3}\theta^3(E) \right\}
    \end{equation}
    and
    \begin{equation}\label{cur2}
        \left\{ (\theta_4, \theta_4') : (\theta_4')^2 - (\theta'(E))^2 = \frac{2}{3}\theta_4^3 - \theta_4^2 - \left(\frac{2}{3}\theta^3(E) - \theta^2(E)\right) \right\}
    \end{equation}
    passing through \(E\). These two curves are depicted in the figure above as the red curves. \(F\) and \(G\) are intersection points between the \(\theta\)-axis and (\ref{cur1}), and the \(\theta\)-axis and (\ref{cur2}) respectively, and \(H\) and \(I\) are intersection points between black curve and (\ref{cur2}), and the black curve and (\ref{cur1}), respectively. Also, take \(h, i \in \Omega\) such that \(H = (\theta_1(h), \theta'_1(h))\), \(I = (\theta_2(i), \theta'_2(i))\).

    Then, we may assume that \(H = (\theta_3(h), \theta'_3(h))\) and \(I = (\theta_4(i), \theta'_4(i))\). Moreover, we can find \(e_-\) and \(e_+\) such that \(h \leq e_+ \leq e_- \leq i\) and \(E = (\theta_3(e_-), \theta'_3(e_-)) = (\theta_4(e_+), \theta_4'(e_+))\). It is easy to observe that \(B = E\) if \(e_- = e_+\), and \(e_- - e_+ \to 0\) implies that \(|B - E| \to 0\) regardless of \(n\).
    
    For any fixed \(n\), we observe that \(e_- - e_+\) increases as \(|B - E|\) increases. Hence, for large enough \(n\), we can find unique \(E\) such that
    \begin{equation}
        e_- - e_+ = 2(|\Omega| - \tilde{c}_2 + \tilde{c}_0).
    \end{equation}
    Then, as \(|\Omega| - \tilde{c}_2 + \tilde{c}_0\) as \(n \to \infty\), we have \(|E - B| \to 0\) as \(n \to \infty\).

    Let us recall that the phase diagram of \(\theta_{\tilde{m}_n, \mu}\) is obtained by modifying the part of the phase diagram of \(\theta_{m_n, \mu}\). We have local minimum and local maximum points \(c_0, c_2 \in \Omega\) of \(\theta_{m_n, \mu}\) such that \(c_0 < \tilde{c}_0 < \tilde{c}_2 < c_2\) and there are no other local extrema between \(c_0\) and \(c_2\). Then we easily observe that this phase diagram of \(\theta_{m_n, \mu}(x)\) for \(x \in {(c_0, c_2)}\) must lie between the black curve and the red curve which follows \(F \to E \to G\): if not, by slightly modifying the phase diagram of \(\theta_{m_n, \mu}\), we obtain the \(m'_n\) such that the phase diagram of \(\theta_{m_n', \mu}\) follows \(A \to H \to E \to I \to C\). By Corollary \ref{modif}, we have
    \begin{equation}
        \int_\Omega \chi_{\{\tilde{m}_n = 0, 1\}} \geq \int_\Omega \chi_{\{m_n' = 0, 1\}} \geq (e_- - \tilde{c}_0) + (\tilde{c}_2 - e_+) > |\Omega|,
    \end{equation}
    which is a contradiction.
    
    Hence, we can take points \(J\) and \(K\) as intersection points between the phase diagram of \(m_n\) and \(H \to E\), and the phase diagram of \(m_n\) and \(E \to I\), respectively. In addition, set \(j_-\) and \(k_+\) such that \(c_0 < j_- \leq k_+ < c_2\), \(J = (\theta_{\tilde{m}_n, \mu}(j_-), \theta'_{\tilde{m}_n, \mu}(j_-))\) and \(K = (\theta_{\tilde{m}_n, \mu}(k_+), \theta'_{\tilde{m}_n, \mu}(k_+))\). On the other hand, let \(j_+\) and \(k_-\) such that \(J = (\theta_4(j_+), \theta_4'(j_+))\) and \(K = (\theta_3(k_-), \theta_3'(k_-))\).

    \begin{figure}[htb!]
    \centering
    \begin{tikzpicture}[scale = 5]
        \draw[thick, ->](-0.1, 0)--(1.1, 0) node [anchor=north]{\(\theta\)};
        \draw[thick, ->](0, -0.1)--(0, 0.9) node [anchor=east]{\(\theta'\)};
        \node[below] at (-0.2, 0) {\(0\)};
        \node[below] at (0.1, 0) {\(A\)};
        \node[below] at (0.9, 0) {\(C\)};
        \node[above] at (0.5, 0.7) {\(B\)};
        \draw[domain=0.3:0.68, smooth,thick,dotted,variable=\x,red] plot ({\x},{sqrt(0.36 - (\x - 0.9)*(\x - 0.9))});
        \draw[domain=0.32:0.7, smooth,thick,dotted,variable=\x,red] plot ({\x},{sqrt(0.36 - (\x - 0.1)*(\x - 0.1))});
        \node[below] at (0.5, 0.41) {\(E\)};
        \node[below] at (0.3, 0) {\(F\)};
        \node[below] at (0.7, 0) {\(G\)};
        \node[above, left] at (0.32, 0.58) {\(H\)};
        \node[above, right] at (0.68, 0.58) {\(I\)};

        \draw[domain=0.1:0.26, smooth,thick,,variable=\x,black] plot ({\x},{sqrt(0.64 - (\x - 0.9)*(\x - 0.9))});
        \draw[domain=0.74:0.9, smooth,thick,,variable=\x,black] plot ({\x},{sqrt(0.64 - (\x - 0.1)*(\x - 0.1))});

        \draw[domain=0.26:0.35, smooth,thick,,variable=\x,black] plot ({\x},{sqrt(0.25 - (\x - 0.1)*(\x - 0.1))});
        \draw[domain=0.65:0.74, smooth,thick,,variable=\x,black] plot ({\x},{sqrt(0.25 - (\x - 0.9)*(\x - 0.9))});
        \draw[domain=0.35:0.5, smooth,thick,,variable=\x,black] plot ({\x},{sqrt(0.49 - (\x - 0.9)*(\x - 0.9))});
        \draw[domain=0.5:0.65, smooth,thick,,variable=\x,black] plot ({\x},{sqrt(0.49 - (\x - 0.1)*(\x - 0.1))});

        \draw[domain=0.26:0.5, smooth,thick,dotted,variable=\x,black] plot ({\x},{sqrt(0.64 - (\x - 0.9)*(\x - 0.9))});
        \draw[domain=0.5:0.74, smooth,thick,dotted,variable=\x,black] plot ({\x},{sqrt(0.64 - (\x - 0.1)*(\x - 0.1))});

        \filldraw[black] (0.42,0.51) circle (0.3pt);
        \node[above] at (0.42,0.52) {\(J\)};
        \filldraw[black] (0.58,0.51) circle (0.3pt);
        \node[above] at (0.58,0.52) {\(K\)};
    \end{tikzpicture}
    \caption{The Phase Diagram of \(\theta_{m_n, \mu}\)}
    \label{6}
    \end{figure}
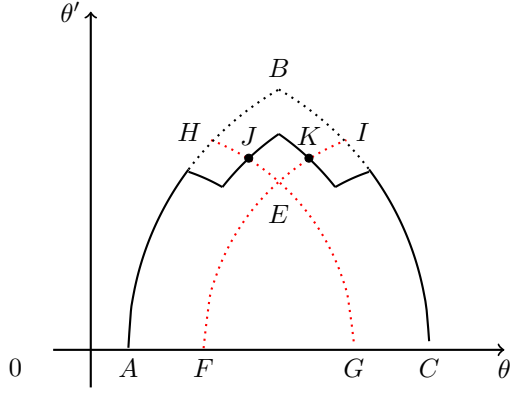

    Note that, by Corollary \ref{modif}, we have
    \begin{equation}
        0 \leq \int_{c_0}^{j_-} \chi_{\{m_n = 0\}} - (h - \tilde{c}_0)
    \end{equation}
    and
    \begin{equation}
        0 \leq \int_{c_0}^{j_-} \chi_{\{m_n = 1\}} - (j_+ - h).
    \end{equation}
    Now, by slightly modifying the phase diagram of \(\theta_{m_n, \mu}\), we can obtain \(m_n''\) such that the phase diagram of \(\theta_{m_n'', \mu}\) follows \(A \to H \to J \to K \to C\). Then,
    \begin{equation}
        |\Omega| - \int_\Omega \chi_{\{m_n'' = 0, 1\}} = \left(\int_{c_0}^{j_-} \chi_{\{m_n = 0\}} - (h - \tilde{c}_0)\right) + \left(\int_{c_0}^{j_-} \chi_{\{m_n = 1\}} - (j_+ - h)\right)
    \end{equation}
    and
    \begin{equation}
        \int_\Omega \chi_{\{m_n'' = 0, 1\}} \geq \int_\Omega \chi_{\{\tilde{m}_n = 0, 1\}} \geq \tilde{c}_2 - \tilde{c}_0
    \end{equation}
    follow by Corollary \ref{modif}. Therefore, we get
    \begin{equation}
        \int_{c_0}^{j_-} \chi_{\{m_n = 0\}} - (h - \tilde{c}_0) \leq |\Omega| - \tilde{c}_0 + \tilde{c}_2
    \end{equation}
    and
    \begin{equation}
        \int_{c_0}^{j_-} \chi_{\{m_n = 1\}} - (j_+ - h) \leq |\Omega| - \tilde{c}_0 + \tilde{c}_2.
    \end{equation}
    Furthermore, in the same way, we get
    \begin{equation}
        0 \leq \int_{k_+}^{c_2} \chi_{\{m_n = 0\}} - (i - k_-) \leq |\Omega| - \tilde{c}_0 + \tilde{c}_2
    \end{equation}
    and
    \begin{equation}
        0 \leq \int_{k_+}^{c_2} \chi_{\{m_n = 1\}} - (\tilde{c}_2 - i) \leq |\Omega| - \tilde{c}_0 + \tilde{c}_2.
    \end{equation}
    Finally, it is easy to observe that
    \begin{equation}
        0 \leq k_+ - j_- \leq (b_- - h) - (i - b_+).
    \end{equation}

    Recall that \(|B - E| \to 0\) as \(n \to \infty\). Moreover, we can observe that, \(b_- - h, i - b_+, e_- - h, i - e_+ \to 0\) as \(|B - E| \to 0\). Therefore, we get
    \begin{equation}
        \begin{split}
            \int_{c_0}^{j_-} \chi_{\{m_n = 0\}} - (b_- - \tilde{c}_0), \int_{k_+}^{c_2} \chi_{\{m_n = 1\}} - (\tilde{c}_2 - b_+)
            \\
            \int_{c_0}^{j_-} \chi_{\{m_n = 1\}}, \int_{k_+}^{c_2} \chi_{\{m_n = 0\}} \quad \textnormal{and} \quad k_+ - j_- \to 0,
        \end{split}
    \end{equation}
    which implies that \(\tilde{m}_n - m_n \to 0\) in \(L^2(\Omega)\) as \(n \to \infty\).
\end{proof}

\section*{Acknowledgment}

This work is an outcome of the Undergraduate Research Program (URP) at KAIST. The authors thank Professor Jaeyoung Byeon for his helpful advice and sincere discussion.

\bibliographystyle{amsplain}

\bibliography{refs}

@article{BHL16,
  author  = {Bai, X. and He, X. and Li, F.},
  title   = {An optimization problem and its application in population dynamics},
  journal = {Proc. Amer. Math. Soc.},
  volume  = {144},
  number  = {5},
  pages   = {2161--2170},
  year    = {2016}
}

@article{CC91,
  author  = {Cantrell, R. S. and Cosner, C.},
  title   = {Diffusive logistic equations with indefinite weights: population models in disrupted environments II},
  journal = {SIAM J. Math. Anal.},
  volume  = {22},
  number  = {4},
  pages   = {1043--1064},
  year    = {1991}
}

@article{L06,
  author  = {Lou, Y.},
  title   = {On the effects of migration and spatial heterogeneity on single and multiple species},
  journal = {J. Differential Equations},
  volume  = {233},
  number  = {2},
  pages   = {200--426},
  year    = {2006}
}

@incollection{L08,
  author    = {Lou, Y.},
  title     = {Some challenging mathematical problems in evolution of dispersal and population dynamics},
  booktitle = {Lecture Notes in Mathematics},
  publisher = {Springer},
  address   = {Berlin},
  pages     = {171--205},
  year      = {2008}
}

@article{MNP20,
  author  = {Mazari, I. and Nadin, G. and Privat, Y.},
  title   = {Optimal location of resources maximizing the total population size in logistic models},
  journal = {J. Math. Pures Appl.},
  volume  = {134},
  pages   = {1--35},
  year    = {2020}
}

@article{MNP21,
  author  = {Mazari, I. and Nadin, G. and Privat, Y.},
  title   = {Optimization of the total population size for logistic diffusive equations: bang-bang property and fragmentation rate},
  journal = {Comm. Partial Differential Equations},
  volume  = {47},
  number  = {4},
  pages   = {797--828},
  year    = {2021}
}

\end{document}